\newtheorem{thm}{Theorem}[section]
\newtheorem{lem}[thm]{Lemma}
\newtheorem{cor}[thm]{Corollary}
\newtheorem{Def}[thm]{Definition}
\newtheorem{prop}[thm]{Proposition}
\newtheorem{rem}[thm]{Remark}
\newtheorem{ex}[thm]{Example}
\newcommand{\bdfn}{\begin{Def} \rm}
	\newcommand{\edfn}{\end{Def}}
\newtheorem{prob}[thm]{Problem}
\newcommand{\trcp}{\textrm{r.c.p.}}
\newcommand{\ercp}{\emph{r.c.p.}}
\newcommand{\es}{\emptyset}
\newcommand{\ci}{\subseteq}
\newcommand{\al}{\alpha}
\newcommand{\mb}{\mathbb}
\newcommand{\mc}{\mathcal}
\newcommand{\msc}{\mathscr}
\newcommand{\beqa}{\begin{eqnarray*}}
	\newcommand{\eeqa}{\end{eqnarray*}}
\newcounter{cnt1}
\newcounter{cnt2}
\newcounter{cnt3}
\newcounter{cnt4}
\newcommand{\blr}{\begin{list}{$($\roman{cnt1}$)$} {\usecounter{cnt1}
			\setlength{\topsep}{0pt} \setlength{\itemsep}{0pt}}}
	\newcommand{\blR}{\begin{list}{\Roman{cnt4}.\ } {\usecounter{cnt4}
				\setlength{\topsep}{0pt} \setlength{\itemsep}{0pt}}}
		\newcommand{\bla}{\begin{list}{$(\alph{cnt2})$} {\usecounter{cnt2}
					\setlength{\topsep}{0pt} \setlength{\itemsep}{0pt}}}
			\newcommand{\bln}{\begin{list}{$($\arabic{cnt3}$)$} {\usecounter{cnt3}
						\setlength{\topsep}{0pt} \setlength{\itemsep}{0pt}}}
				\newcommand{\el}{\end{list}}
			\title[Restricted Chebyshev centers in $L_1$-predual spaces]{Restricted Chebyshev centers in $L_1$-predual spaces}
			\author[Thomas]{Teena Thomas}
			\address[Teena Thomas]{Department of Mathematics \\
				Indian Institute of Technology Hyderabad \\
				India, \textit{E-mail~:} \textit{tteena.tthomas@gmail.com/ma19resch11003@iith.ac.in}}
			\subjclass[2010]{Primary 46B20, 46E15, 41A65. Secondary 52A07.}
			\keywords{restricted Chebyshev center, restricted Chebyshev radius, $L_1$-predual space, $M$-ideal, restricted Chebyshev-center map.}
\begin{document}
				\maketitle
				
				\begin{abstract}
					In this paper, we provide a necessary and sufficient condition for the existence of a restricted Chebyshev center of a compact subset of an $L_{1}$-predual space in a closed convex subset of the $L_{1}$-predual space. We also provide a geometrical characterization of an $L_{1}$-predual space in terms of the restricted Chebyshev radius in the following manner. A real Banach space $X$ is an $L_{1}$-predual space if and only if for each non-empty finite subset $F$ of $X$ and closed convex subset $V$ of $X$, $\emph{rad}_{V}(F) = \emph{rad}_{X}(F) + d(V, \emph{cent}_{X}(F))$, where we denote $\emph{rad}_{X}(F)$, $\emph{rad}_{V}(F)$, $\emph{cent}_{X}(F)$ and $d(V, \emph{cent}_{X}(F))$ to be the Chebyshev radius of $F$ in $X$, the restricted Chebyshev radius of $F$ in $V$, the set of Chebyshev centers of $F$ in $X$  and the distance between the sets $V$ and $\emph{cent}_{X}(F)$ respectively. Furthermore, we explicitly describe the Chebyshev centers of closed bounded subsets of an $M$-summand in the space of real-valued continuous functions on a compact Hausdorff space.
				\end{abstract}
			\section{Introduction}
			\label{intro}
			In this paper, we consider Banach spaces only over the real field $\mb{R}$ and all the subspaces are assumed to be norm closed. Let $X$ be a Banach space. For $x \in X$ and $r>0$, $B(x,r)$ denotes the norm open ball in $X$ centered at $x$ with radius $r$ and $B_X$ denotes the norm closed unit ball of $X$. Let $\mc{CV}(X)$, $\mc{CB}(X)$, $\mc{K}(X)$ and $\mc{F}(X)$ denote the classes of non-empty closed convex, closed bounded, compact and finite subsets of $X$ respectively. For two subsets $A$ and $B$ of $X$, $d(A,B) = \inf \{\|a-b\|: a\in A\mbox{, }b \in B\}$. 
			
			For an element $x \in X$, a set $B \in \mc{CB}(X)$, let $r(x,B) = \sup\{\|x-b\|:b \in B\}$. Let $V \in \mc{CV}(X)$. The quantity $\emph{rad}_{V}(B) = \inf_{v \in V} r(v,B)$ is called the \textit{restricted Chebyshev radius} of $B$ in $V$. An element $v \in V$ such that $\emph{rad}_{V}(B) = r(v,B)$ is called a \textit{restricted  Chebyshev center} of $B$ in $V$ and the set $\{v \in V: \emph{rad}_{V}(B) = r(v,B)\}$ is denoted by $\emph{cent}_{V}(B)$. If $V = X$, then $\emph{rad}_{X}(B)$ is called the \textit{Chebyshev radius} of $B$ in $X$ and the elements in $\emph{cent}_{X}(B)$ are called the \textit{Chebyshev centers} of $B$ in $X$. Furthermore, for $\delta>0$, let $\emph{cent}_{V}(B,\delta)=\{v \in V: r(v,B) \leq \emph{rad}_{V}(B)+\delta\}$.
			
			\begin{Def}[{\cite{PN}}]
				Let $X$ be a Banach space, $V \in \mc{CV}(X)$ and $\msc{F} \ci \mc{CB}(X)$. 
				\begin{enumerate}[{(i)}]
					\item We say $X$ admits centers for $\msc{F}$ if for each $F \in \msc{F}$, $\textrm{cent}_{X}(F) \neq \es$.
					\item The pair $(V,\msc{F})$ is said to satisfy the \textit{restricted center property} $($\trcp$)$ if for each $F \in \msc{F}$, $\textrm{cent}_{V}(F) \neq \es$. In particular, if $\msc{F}$ is the class of all singleton subsets of $X$ and $(V, \msc{F})$ satisfies \trcp, then $V$ is said to be proximinal in $X$.
					\item The set-valued function, denoted by $\textrm{cent}_{V}(.)$, which maps each $F \in \msc{F}$ to the set $cent_{V}(F)$ is called the restricted Chebyshev-center map on $\msc{F}$. In particular, if $V=X$, then the map $\textrm{cent}_{X}(.)$ is called the \textit{Chebyshev-center} map on $\msc{F}$.
				\end{enumerate}
			\end{Def}
			The continuity of the map $\emph{cent}_{V}(.)$, for a $V \in \mathcal{CV}(X)$, is discussed in the Hausdorff metric. For a Banach space $X$, the Hausdorff metric $d_{H}$ is defined as follows: For each $B_1,B_2 \in \mc{CB}(X)$, \[d_{H}(B_1,B_2) = \inf\{a>0: B_1 \ci B_2 + aB(0,1), B_2 \ci  B_1 + aB(0,1)\}.\]
			
			One of the classical problems in approximation theory is determining the existence of restricted Chebyshev centers in Banach spaces. In this paper, one of our aims is to study this problem in $L_1$-predual spaces. A Banach space $X$ is said to be an $L_{1}$-predual space if the dual space of $X$, denoted by $X^\ast$, is isometric to an $L_{1}(\mu)$ space, for some positive measure $\mu$. The spaces of real-valued continuous functions on a compact Hausdorff space $K$ and those vanishing at infinity on a locally compact Hausdorff space $T$, denoted by $C(K)$ and $C_0(T)$ respectively, are two important examples of $L_1$-predual spaces. One can refer \cite{LL} for more examples of an $L_1$-predual space.
			
			It is known that an $L_1$-predual space $X$ admits centers for $\mc{K}(X)$ (see \cite[Corollary~$4$]{BR} and \cite[Theorem~$4.8$, p.~$38$]{JLin}). In this paper, we use a different approach to not only prove the existence but also give an explicit description of a Chebyshev center of a compact subset of an $L_1$-predual space. This description further aids us to prove a necessary and sufficient condition for the set $\emph{cent}_{V}(F)$ to be non-empty, whenever $X$ is an $L_1$-predual space, $F \in \mc{K}(X)$ and $V \in \mc{CV}(X)$. The motivation for the above condition is \cite[Theorem~2.2]{SW}.
			
			There are various types of characterizations available in the literature for $L_1$-predual spaces; for example, see \cite[Chapter~7, Section~21]{Lacey} and \cite{Rao2}. \cite{EWW} characterized the spaces of the type $C(K)$ and $C_{0}(T)$, whenever $K$ is a compact Hausdorff space and $T$ is a locally compact Hausdorff space, in terms of an identity in the following manner.
			
			\begin{thm}[{\cite{EWW}}]\label{T1}
				Let $X$ be a Banach space. Then the following statements are equivalent.
				\begin{enumerate}[{(i)}]
					\item $X$ is isometric to either a $C(K)$ space or a $C_{0}(T)$ space, for some compact Hausdorff space $K$ or a locally compact Hausdorff space $T$.
					\item For each $V \in \mc{CV}(X)$ and $B \in \mc{CB}(X)$, 
					$\textrm{rad}_{V}(B) = \textrm{rad}_{X}(B) + d(V,\textrm{cent}_{X}(B)).$
					\item For each $V$~$\in$~$\mc{CV}(X)$ and $B$~$\in$~$\mc{CB}(X)$, 
					$\textrm{rad}_{V}(B) = \textrm{rad}_{X}(B) + \lim_{\delta \rightarrow 0^{+}} d(V,\textrm{cent}_{X}(B,\delta)).$
				\end{enumerate}
			\end{thm}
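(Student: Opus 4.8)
The plan is to establish the cyclic chain $(i)\Ra(ii)\Ra(iii)\Ra(i)$, with the bulk of the work in the two end implications. For $(i)\Ra(ii)$ I would argue in $C(K)$ and deduce the $C_0(T)$ case by passing to the one-point compactification $K=T\cup\{\infty\}$ and tracking the resulting $M$-ideal structure. Fix $V\in\mc{CV}(X)$ and $B\in\mc{CB}(X)$, put $R=\textrm{rad}_X(B)$, and introduce the envelopes $u(t)=\sup_{b\in B}b(t)$ and $\ell(t)=\inf_{b\in B}b(t)$, which are respectively lower and upper semicontinuous with $\ell\le u$. A direct computation gives $r(c,B)=\sup_{t}\max\bigl(u(t)-c(t),\,c(t)-\ell(t)\bigr)$, whence $R=\tfrac12\sup_t\bigl(u(t)-\ell(t)\bigr)$ and $\textrm{cent}_X(B)$ is exactly the set of $c\in C(K)$ lying in the band $u-R\le c\le \ell+R$. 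The inequality $\textrm{rad}_V(B)\le R+d(V,\textrm{cent}_X(B))$ is a triangle-inequality estimate valid in every Banach space. For the reverse inequality I would show that for each $v\in V$, writing $\rho=r(v,B)$, there is a Chebyshev center $c$ with $\|v-c\|\le\rho-R$: pointwise the constraints $\max(u-R,\,v-(\rho-R))\le c\le\min(\ell+R,\,v+(\rho-R))$ define a nonempty interval (the four compatibility inequalities reduce to the definitions of $R$ and $\rho$), so it remains only to choose such a $c$ continuously. Granting this, $d(V,\textrm{cent}_X(B))\le\|v-c\|\le\rho-R$, i.e.\ $r(v,B)\ge R+d(V,\textrm{cent}_X(B))$ for every $v\in V$; taking the infimum over $v$ yields $(ii)$ and in passing shows $\textrm{cent}_X(B)\ne\es$.

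The technical heart of this direction is the continuous selection just invoked. Its lower boundary is lower semicontinuous and its upper boundary is upper semicontinuous, which is the \emph{opposite} of the hypotheses of the Kat\v etov--Tong insertion theorem, so no continuous function need exist between an arbitrary such pair. The special feature that rescues the argument is that wherever the band pinches to a single value (the points where the relevant gap vanishes) the forced value of $c$ varies continuously: a short semicontinuity computation shows that along any net of pinch points converging to a pinch point the two envelopes converge, so the pinched values converge as well. I would isolate this as a selection lemma and build the desired $c$ by filling in the remaining slack.

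The implication $(ii)\Ra(iii)$ is soft. Since $\textrm{cent}_X(B)\ci\textrm{cent}_X(B,\delta)$ for every $\delta>0$, one has $\lim_{\delta\to0^+}d(V,\textrm{cent}_X(B,\delta))\le d(V,\textrm{cent}_X(B))$, while a one-line triangle-inequality estimate gives $\textrm{rad}_V(B)\le\textrm{rad}_X(B)+\lim_{\delta\to0^+}d(V,\textrm{cent}_X(B,\delta))$ in any Banach space. Substituting $(ii)$ into these two facts forces equality throughout, which is $(iii)$.

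The remaining implication $(iii)\Ra(i)$ is the crux. Here I would translate the identity into the language of balls: with $\bar B(x,r)=\{y:\|y-x\|\le r\}$, one has $\textrm{cent}_X(B,\delta)=\bigcap_{b\in B}\bar B(b,R+\delta)$, and $\textrm{rad}_V(B)$ is the least radius $r$ for which $V$ meets $\bigcap_{b\in B}\bar B(b,r)$. In this reading $(iii)$ asserts that, as the common radius increases past $R$, the intersection $\bigcap_{b}\bar B(b,R+s)$ advances toward \emph{every} closed convex set $V$ at exactly unit speed; specializing $V$ to points and to lines, this is equivalent to the ball-intersection inflation identity $\bigcap_{b}\bar B(b,R+s)=\textrm{cent}_X(B)+sB_X$. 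This is precisely the metric property which, on top of the $3.2$ intersection property (in the sense of Lindenstrauss) characterizing $L_1$-predual spaces, singles out the spaces $C(K)$ and $C_0(T)$ among all Banach spaces. I would therefore verify this intersection property and then invoke the Lindenstrauss--Nachbin--Goodner--Kelley circle of representation theorems, separating the case in which a strong order unit survives (yielding $C(K)$) from the case in which it does not (yielding $C_0(T)$ via the one-point compactification). The main obstacles are, in this last step, extracting exactly the right intersection property from the analytic identity and matching it to the representation theorem and, in the first implication, the continuous-selection lemma underlying the explicit description of $\textrm{cent}_X(B)$.
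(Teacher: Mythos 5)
A preliminary remark: the paper does not prove Theorem~\ref{T1} at all --- it is imported from \cite{EWW} and used only as motivation; the result the paper actually proves is the finite-set/$L_1$-predual analogue, Theorem~\ref{T2.3.5}. So your proposal can only be measured against the cited source and against the machinery the paper builds in Section~\ref{sec2.2}. Measured that way, it has two genuine gaps, one in each of the hard implications. (Your $(ii)\Ra(iii)$ is fine and coincides with the paper's argument in Theorem~\ref{T2.3.5} and Corollary~\ref{C2.3.6}.)

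In $(i)\Ra(ii)$ your identity $R=\tfrac12\sup_t\bigl(u(t)-\ell(t)\bigr)$ is false for general $B\in\mc{CB}(C(K))$; it holds only for compact $B$. Take $K=\{0\}\cup\{1/n:n\in\mb{N}\}$ and $B=\{f_n\}$, where $f_n$ is the indicator of the clopen set $\{1/n\}$ for $n$ even and its negative for $n$ odd: then $B$ is closed, bounded, $\sup_t(u(t)-\ell(t))=1$, yet any $c\in C(K)$ with $r(c,B)=\rho$ must satisfy $c(1/n)\geq 1-\rho$ for even $n$ and $c(1/n)\leq \rho-1$ for odd $n$, so continuity at $0$ forces $\rho\geq 1$; the Chebyshev radius is $1$, not $\tfrac12$. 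The correct radius is $r_B=\tfrac12\sup_t\bigl(N_B(t)-n_B(t)\bigr)$ with the semicontinuous regularizations $N_B=\limsup M_B$, $n_B=\liminf m_B$ of $(\ref{Eq2.2.1})$--$(\ref{Eq2.2.2})$, and this regularization also dissolves the difficulty you call the ``technical heart'': the band $\max\{N_B-R,\,v-(\rho-R)\}\leq c\leq\min\{n_B+R,\,v+(\rho-R)\}$ has an upper semicontinuous lower boundary and a lower semicontinuous upper boundary, is pointwise non-empty (the compatibility inequalities survive passage to the regularized envelopes because $v$ is continuous), and Kat\v{e}tov--Tong insertion applies directly. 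This is exactly the Smith--Ward argument \cite{SW} that the paper adapts in Theorem~\ref{T2.3.4}; your proposed ``pinch-point'' selection lemma is both unproven and unnecessary. (Your reduction of $C_0(T)$ to the one-point compactification also hides a real issue: $C_0(T)$ sits inside $C(T\cup\{\iy\})$ as $J_{\{\iy\}}$ with $\{\iy\}$ closed but not clopen, and the paper's Example~\ref{Ex2.3.12} shows the center description can change in precisely that situation.)

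The implication $(iii)\Ra(i)$ is where the proposal stops being a proof. The asserted equivalence of $(iii)$ with the inflation identity $\bigcap_{b\in B}\{y:\|y-b\|\leq R+s\}=\textrm{cent}_X(B)+sB_X$ is not established: specializing $V$ to a singleton $\{x\}$ with $r(x,B)\leq R+s$ only yields $\lim_{\de\ra 0^+}d(x,\textrm{cent}_X(B,\de))\leq s$, i.e.\ proximity to \emph{near}-centers, which does not give the set equality. And there is no off-the-shelf theorem saying that this identity together with an intersection property ``singles out'' $C(K)$ and $C_0(T)$; that is essentially the statement being proved. The actual route --- in \cite{EWW}, and for the finite-set version in the paper's own Theorem~\ref{T2.3.5} --- is first to apply $(iii)$ with $F^\prime$ a growing finite set and $V$ a singleton to show $\textrm{rad}_X(F)=\tfrac12\textrm{diam}(F)$ for every finite $F$, which by \cite{Rao2} already makes $X$ an $L_1$-predual, and then to use the identity for non-compact closed bounded sets to exclude the $L_1$-preduals that are not of type $C(K)$ or $C_0(T)$. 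Neither stage appears in your outline, so this direction needs to be rebuilt from scratch.
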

			The above characterization validates our corresponding investigation in $L_1$-predual spaces. In fact, we prove that if the conditions \textit{(ii)} and \textit{(iii)} of Theorem~\ref{T1} are weakened by replacing $\mc{CB}(X)$ with $\mc{F}(X)$, then these conditions characterize an $L_1$-predual space; see Theorem~\ref{T2.3.5}. 
			
			We now recall a notion in Banach spaces called as an $M$-ideal, which is stronger than proximinality.
			\begin{Def}[{\cite{HWW}}]
				Let $X$ be a Banach space. 
				\begin{enumerate}[{(i)}]
					\item A linear projection~$P$ on $X$ is said to be an $L$-projection $(M\mbox{-projection})$ if for each $x \in X$, $\|x\|= \|Px\| +\|x-Px\|$ $(\|x\|= \max\{\|Px\|, \|x-Px\|\})$. 
					\item A subspace~$J$ of $X$ is said to be an $L$-summand $(M\mbox{-summand})$ in $X$ if it is the range of an $L$-projection $(M\mbox{-projection})$.
					\item A subspace~$J$ of $X$ is said to be an $M$-ideal in $X$ if the annihilator of $J$, denoted by $J^{\perp}$, is an $L$-summand in $X^\ast$.
				\end{enumerate}
			\end{Def}
			
			It is easy to deduce from \cite[Theorem~$2.2$, p.~$18$]{HWW} and \cite[Theorem~$6$, p.~$212$]{Lacey} that an $M$-ideal in an $L_1$-predual space is again an $L_1$-predual space. Further, it follows from \cite[Corollary~4]{Amir} that for a compact Hausdorff space $K$, if $J$ is an $M$-ideal in $C(K)$, then $(J,\mc{CB}(C(K)))$ satisfies \ercp. Therefore, motivated by the above result and \cite[Theorem~I.2.2]{Ward}, we ask the following question and answer it completely in section~\ref{sec2.30}. 
			
			\begin{prob}\label{Q3}
				Let $K$ be a compact Hausdorff space. Let $J$ be an $M$-ideal in $C(K)$ and $B \in \mc{CB}(C(K))$. Is it possible to describe the set 
				\begin{equation}\label{EqnQ3.1}
					\textrm{cent}_{J}(B) = \{h \in J : N_{B} -r_B \leq h \leq n_{B} + r_B\},
				\end{equation}
				where for each $t \in K$, $m_{B}(t) = \inf\{b(t): b \in B\}$, $n_{B}(t) = \liminf_{s \rightarrow t} m_{B}(s)$, $M_{B}(t) = \sup\{b(t):b \in B\}$,
				$N_{B}(t) = \limsup_{s \rightarrow t} M_{B}(s)$ and 
				$r_{B} = \frac{1}{2} \sup\{N_{B}(t) - n_{B}(t): t\in K\}$?
			\end{prob}
			
			We now provide a brief account of each section.
			
			In section~\ref{sec2.2}, we discuss a few preliminaries and observations which lay the groundwork for the results in the subsequent sections. 
			
			In section~\ref{sec2}, we present the main results of this paper. It is divided into two subsections. In subsection~\ref{sec2.30}, we prove that the Chebyshev centers of the closed bounded subsets of the $M$-summands in $C(K)$ have the description as in ($\ref{EqnQ3.1}$); see Theorem~\ref{T2.3.9}. We illustrate with examples that in general, the description in ($\ref{EqnQ3.1}$) may not hold true for $M$-ideals which are not $M$-summands.
			
			Let $A(K)$ denote the space of real-valued affine continuous functions on a compact convex subset $K$ of a locally convex topological vector space (\textit{lctvs}). In subsection~\ref{sec2.3}, for an $L_1$-predual space $X$ and $F \in \mc{K}(X)$, we explicitly describe the elements in $\emph{cent}_{X}(F)$ via a well-known isometric identification of $X$ to a subspace of $A(B_{X^\ast})$, where $B_{X^{\ast}}$ is endowed with the weak$^\ast$ topology (see Proposition~\ref{L2.2.6}). This description leads us to the following consequences which are of interest. We prove that for an $L_1$-predual space $X$, $F \in \mc{K}(X)$ and $V \in \mc{CV}(X)$, the set $\emph{cent}_{V}(F) \neq \es$ if and only if the infimum defining $d(V, \emph{cent}_{X}(F))$ is attained. Furthermore, we characterize an $L_1$-predual space $X$ as a Banach space which satisfies the identity $\emph{rad}_{V} (F) = \emph{rad}_{X}(F) + d(V,\emph{cent}_{X}(F))$, for each $V \in \mc{CV}(X)$ and $F \in \mc{K}(X)$. We also prove that for an $L_1$-predual $X$, the map $\emph{cent}_{X}(.)$ is $2$-Lipschitz continuous on $\mc{K}(X)$ in the Hausdorff metric. 
			
			\section{Preliminaries}\label{sec2.2}
			Let $K$ be a compact Hausdorff space and $\mathcal{A}$ be a subspace of $C(K)$. Let $B \in \mc{CB}(\mc{A})$. 
			We now define the following functions. For each $t \in K$,
			\begin{equation}\label{Eq2.2.1}
				\begin{split}
					m_{B}(t) &= \inf\{b(t): b \in B\},\\
					n_{B}(t) &= \liminf_{s \rightarrow t} m_{B}(s),\\
				\end{split}
				\quad \quad
				\begin{split}
					M_{B}(t) &= \sup\{b(t):b \in B\},\\
					N_{B}(t) &= \limsup_{s \rightarrow t} M_{B}(s).
				\end{split}
			\end{equation}
			Let us also define 
			\begin{equation}\label{Eq2.2.2}
				r_{B} = \frac{1}{2} \sup\{N_{B}(t) -n_{B}(t): t \in K\}.
			\end{equation} 
			
			\begin{rem}\label{R2.2.3}
				The following remarks are few properties of the functions and number defined in $(\ref{Eq2.2.1})$ and $(\ref{Eq2.2.2})$ respectively, which are easy to verify.
				\begin{enumerate}[{(i)}]
					\item If $B \in \mc{CB}(\mc{A})$, then the functions $N_{B}, m_{B}$ are upper semicontinuous and  $n_{B}, M_{B}$ are lower semicontinuous functions on $K$.
					\item If $F \in \mc{K}(\mc{A})$, then the functions $M_{F}$ and $m_{F}$ are continuous on $K$ and consequently, $N_{F}=M_{F}$ and $n_{F}=m_{F}$.
					\item For each $B \in \mc{CB}(\mc{A})$, $r_{B} \leq \textrm{rad}_{\mathcal{A}}(B)$.~A proof of this inequality can be found in \cite[Lemma~I.2.1]{Ward}.
				\end{enumerate}
			\end{rem}
			
			For a Banach space $X$ and a bounded subset $A$ of $X$, let $\emph{diam}(A) = \sup \{\|a-a^\prime\|: a,a^\prime \in A\}$.
			\begin{lem}\label{L2.2.4}
				Let $K$ be a compact Hausdorff space and $\mathcal{A}$ be a subspace of $C(K)$. If $F \in \mc{K}(\mc{A})$, then $\frac{1}{2} \textrm{diam}(F) \leq r_{F}$. Moreover, if $F \in \mc{F}(\mc{A})$, then the equality holds.  
			\end{lem}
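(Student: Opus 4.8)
The plan is to relate the sup-norm diameter of $F$ to the pointwise vertical gap $M_F(t)-m_F(t)$, and then to pass from $M_F,m_F$ to the semicontinuous envelopes $N_F,n_F$ that appear in the definition of $r_F$. The inequality and the equality then come from comparing these two gaps, using only semicontinuity for the former and genuine continuity for the latter.

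First I would prove $\tfrac12\,\mathrm{diam}(F)\le r_F$. Fix arbitrary $a,a'\in F$. Since $K$ is compact and $a-a'\in C(K)$, the norm is attained, say $\|a-a'\|=a(t_0)-a'(t_0)$ at some $t_0\in K$ (interchanging $a,a'$ if necessary). By definition $a(t_0)\le M_F(t_0)$ and $a'(t_0)\ge m_F(t_0)$, so $\|a-a'\|\le M_F(t_0)-m_F(t_0)$. By Remark~\ref{R2.2.3}(i) the function $M_F$ is lower semicontinuous and $m_F$ is upper semicontinuous, whence $N_F(t_0)=\limsup_{s\to t_0}M_F(s)\ge M_F(t_0)$ and $n_F(t_0)=\liminf_{s\to t_0}m_F(s)\le m_F(t_0)$. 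Combining these, $\|a-a'\|\le N_F(t_0)-n_F(t_0)\le \sup_{t\in K}\bigl(N_F(t)-n_F(t)\bigr)=2r_F$. As $a,a'$ were arbitrary, $\mathrm{diam}(F)\le 2r_F$, which is the asserted inequality.

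Next, for finite $F=\{a_1,\dots,a_n\}$, I would establish the reverse inequality. Here $M_F=\max_i a_i$ and $m_F=\min_i a_i$ are continuous, so Remark~\ref{R2.2.3}(ii) gives $N_F=M_F$ and $n_F=m_F$, and therefore $2r_F=\sup_{t}\bigl(M_F(t)-m_F(t)\bigr)$. Since $K$ is compact and $M_F-m_F$ is continuous, this supremum is attained at some $t_0\in K$, and by finiteness there are indices $i,j$ with $M_F(t_0)=a_i(t_0)$ and $m_F(t_0)=a_j(t_0)$. Then $\|a_i-a_j\|\ge a_i(t_0)-a_j(t_0)=M_F(t_0)-m_F(t_0)=2r_F$, so $\mathrm{diam}(F)\ge 2r_F$; together with the first part this yields $\tfrac12\,\mathrm{diam}(F)=r_F$.

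The only delicate point is the first step, where one must bound $\|a-a'\|$ at a single extremal point $t_0$ by $M_F(t_0)-m_F(t_0)$ and then invoke semicontinuity in the correct direction, so that the raw gap is dominated by $N_F(t_0)-n_F(t_0)$ rather than conversely; the passage from $N_F,n_F$ back to $M_F,m_F$ is precisely what fails in general and is restored only by continuity. For the finite case the sole subtlety is the simultaneous attainment of the outer supremum (by compactness of $K$) and of the pointwise extrema (by finiteness of $F$) at a common $t_0$, which is immediate. I note in passing that, by equicontinuity of compact subsets of $C(K)$, the same attainment persists for every $F\in\mc{K}(\mc{A})$, so equality in fact holds throughout $\mc{K}(\mc{A})$; only the finite case is recorded here, which is all that is needed in the sequel.
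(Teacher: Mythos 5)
Your proof is correct and takes essentially the same route as the paper's: both arguments reduce the claim to comparing $\|z_1-z_2\|$ with the pointwise gap $M_F(t)-m_F(t)$ (you via the envelope inequalities $N_F\ge M_F$, $n_F\le m_F$; the paper via the continuity of $M_F,m_F$ from Remark~\ref{R2.2.3}(ii)), and both obtain the reverse inequality for finite $F$ by attaining the pointwise extrema. Your closing observation that equality in fact persists for all $F\in\mc{K}(\mc{A})$ is also correct and is precisely what the paper records later as Remark~\ref{R2.2.7}.
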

			\begin{proof}
				Let $F \in \mc{K}(\mc{A})$. Then from Remark~\ref{R2.2.3} \textit{(ii)}, $n_{F} = m_{F}$ and $N_{F} = M_{F}$. Let $L=\sup\{M_{F}(t)-m_{F}(t):t \in K\}.$ For each $t \in K$, 
				\begin{equation}
					M_{F}(t) -m_{F}(t) = \sup_{z \in F} z(t) - \inf_{z \in F} z(t)=\sup_{z \in F} z(t) +\sup_{z \in F} -z(t).
				\end{equation}
				Now, for each $z_1, z_2 \in F$ and $t \in K$, \[\pm(z_1(t)-z_2(t)) \leq \sup_{z^\prime \in F} z^\prime(t) +\sup_{z^\prime \in F} -z^\prime(t).\] It follows that $\|z_1-z_2\| \leq L$ and hence, $\emph{diam}(F) \leq L$. This gives us the desired inequality.
				
				Now, let $F \in \mc{F}(\mc{A})$. Then, for each $t \in K$, there exists $z_1, z_2 \in F$ such that 
				\begin{equation}\label{eqn6}
					\begin{split}
						M_{F}(t) -m_{F}(t) = \max_{z^\prime \in F} z^\prime(t) - \min_{z^\prime \in F} z^\prime(t) = z_1(t) -z_2(t).
					\end{split}
				\end{equation}
				This implies $M_{F}(t) -m_{F}(t) \leq \|z_1 -z_2\| \leq \emph{diam}(F).$ Hence $L \leq \emph{diam}(F)$. 
			\end{proof}	
			
			For a subspace $\mathcal{A}$ of $C(K)$ and $B \in \mc{CB}(\mathcal{A})$, we provide a description of $\emph{cent}_{\mathcal{A}} (B)$ in the following result.
			
			\begin{prop}\label{P2.2.5}
				Let $K$ be a compact Hausdorff space and $\mathcal{A}$ be a subspace of $C(K)$. If $B \in \mc{CB}(\mc{A})$ such that $\textrm{cent}_{\mc{A}}(B) \neq \es$, then \[\textrm{cent}_{\mc{A}}(B) = \{x \in \mc{A}: N_{B}-\textrm{rad}_{\mc{A}}(B) \leq x \leq n_{B}+ \textrm{rad}_{\mc{A}}(B)\}.\]
			\end{prop}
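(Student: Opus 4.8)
The plan is to describe the restricted Chebyshev centers entirely through a pointwise analysis of the map $x \mapsto r(x,B)$ on $\mathcal{A}$. First I would fix $x \in \mathcal{A}$ and rewrite $r(x,B) = \sup_{b \in B}\sup_{t \in K}|x(t)-b(t)|$, interchanging the two suprema (which is harmless, being a supremum over a product set) and evaluating the inner one over $b$. Since for each fixed $t$ one has $\sup_{b\in B}|x(t)-b(t)| = \max\{x(t)-m_B(t),\, M_B(t)-x(t)\}$, this yields
\[
r(x,B) = \sup_{t\in K}\max\{x(t)-m_B(t),\, M_B(t)-x(t)\} = \max\Big\{\sup_{t\in K}(x(t)-m_B(t)),\ \sup_{t\in K}(M_B(t)-x(t))\Big\}.
\]

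The crux of the argument is to replace the semicontinuous functions $m_B$ and $M_B$ in these two one-sided suprema by their regularizations $n_B$ and $N_B$; this is exactly where the continuity of $x$ and the definitions of $n_B, N_B$ as a liminf and limsup enter. I would prove the two identities
\[
\sup_{t\in K}(x(t)-m_B(t)) = \sup_{t\in K}(x(t)-n_B(t)), \qquad \sup_{t\in K}(M_B(t)-x(t)) = \sup_{t\in K}(N_B(t)-x(t)).
\]
For the first, the inequality $\geq$ is immediate from $n_B \leq m_B$ (Remark~\ref{R2.2.3}). For $\leq$, I would fix $t_0 \in K$ and choose a net $s_\alpha \to t_0$ realizing $n_B(t_0)=\liminf_{s\to t_0} m_B(s)$, so that $m_B(s_\alpha)\to n_B(t_0)$; continuity of $x$ gives $x(s_\alpha)\to x(t_0)$, whence $x(t_0)-n_B(t_0)=\lim_\alpha\big(x(s_\alpha)-m_B(s_\alpha)\big)\leq \sup_{t}(x(t)-m_B(t))$, and taking the supremum over $t_0$ finishes it. The second identity is entirely symmetric, using $N_B \geq M_B$ and the limsup.

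Combining these gives $r(x,B)=\max\{\sup_t(x(t)-n_B(t)),\ \sup_t(N_B(t)-x(t))\}$. Writing $r:=\textrm{rad}_{\mathcal{A}}(B)$, the condition $r(x,B)\leq r$ is then equivalent, after unwinding the two suprema pointwise, to the simultaneous inequalities $x(t)-n_B(t)\leq r$ and $N_B(t)-x(t)\leq r$ for all $t\in K$, that is, to $N_B - r \leq x \leq n_B + r$. Finally, since $r(x,B)\geq r$ holds for every $x\in\mathcal{A}$ by the very definition of the radius, for $x\in\mathcal{A}$ the inequality $r(x,B)\leq r$ is equivalent to $r(x,B)=r$, i.e. to $x\in\textrm{cent}_{\mathcal{A}}(B)$. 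This establishes the claimed set equality; the hypothesis $\textrm{cent}_{\mathcal{A}}(B)\neq\es$ serves to guarantee that both sides are non-empty, the identity otherwise merely asserting that both are empty.

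I expect the main obstacle to be the regularization step: one must verify carefully that passing from $m_B, M_B$ to $n_B, N_B$ leaves the two one-sided suprema unchanged, since it is precisely this passage that accounts for the appearance of $n_B$ and $N_B$, rather than $m_B$ and $M_B$, in the statement. Everything else amounts to bookkeeping around the identity $\sup_{b\in B}|x(t)-b(t)|=\max\{x(t)-m_B(t),M_B(t)-x(t)\}$ and the trivial bound $r(x,B)\geq \textrm{rad}_{\mathcal{A}}(B)$.
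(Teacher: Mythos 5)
Your proposal is correct and follows essentially the same route as the paper: both arguments rest on the pointwise bounds $M_B - r \le x \le m_B + r$ together with the observation that continuity of $x$ permits replacing $M_B,m_B$ by their regularizations $N_B,n_B$ without changing the relevant suprema, the reverse inclusion reducing to the trivial bound $r(x,B)\ge \textrm{rad}_{\mathcal{A}}(B)$. (One small slip: in your two displayed identities the labels ``$\ge$'' and ``$\le$'' are attached to the wrong directions---monotonicity $n_B\le m_B$ gives $\sup_t\bigl(x(t)-m_B(t)\bigr)\le\sup_t\bigl(x(t)-n_B(t)\bigr)$, while your net argument gives the converse---but both inequalities are in fact established, so the proof stands.)
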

			\begin{proof}
				Let $B \in \mc{CB}(\mc{A})$. Suppose $x \in \emph{cent}_{\mc{A}}(B)$. Then $\emph{rad}_{\mc{A}}(B) = r(x,B).$ It follows that for each $t \in K$ and $b \in B$, \[b(t) - \emph{rad}_{\mc{A}}(B) \leq x(t) \leq b(t)+\emph{rad}_{\mc{A}}(B).\] Then, from the definitions in $(\ref{Eq2.2.1})$, for each $t \in K$,
				\begin{equation}\label{E5}
					N_{B}(t) -\emph{rad}_{\mc{A}}(B) \leq x(t) \leq n_{B}(t)+\emph{rad}_{\mc{A}}(B).
				\end{equation} Now, if $x \in \mc{A}$ such that the inequalities in $(\ref{E5})$ hold, then using the fact that $M_{B}$ and $-m_{B}$ are lower semicontinuous on $K$, it is easy to deduce that $\emph{rad}_{\mc{A}}(B) =r(x,B)$.
			\end{proof}			
			
			Let $X$ be a Banach space. Let $B_{X^{\ast}}$ be endowed with the weak$^\ast$ topology. There exists a natural affine homeomorphism from $B_{X^\ast}$ onto $B_{X^\ast}$ given by the map $\sigma(x^\ast) = -x^\ast$, for each $x^\ast \in B_{X^\ast}$. It is easy to see that $\sigma^2$ is the identity map on $B_{X^\ast}$. Let us define $A_{\sigma}(B_{X^\ast}) = \{a \in A(B_{X^\ast}): a = -a \circ \sigma\}.$ Clearly, $A_{\sigma}(B_{X^\ast})$ is a closed subspace of $A(B_{X^\ast})$. The following result  identifies $X$ with $A_{\sigma}(B_{X^\ast})$.
			
			\begin{lem}[{\cite[Lemma~8, p.~213]{Lacey}}]\label{L2.2.6}
				Let $X$ be a Banach space. Then $X$ is isometric and linearly isomorphic to $A_\sigma(B_{X^\ast})$ under the mapping $x \mapsto \overline{x}$ where $\overline{x}(x^\ast)= x^\ast(x)$, for each $x^\ast \in B_{X^\ast}$ and $x \in X$.
			\end{lem}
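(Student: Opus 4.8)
The plan is to verify directly that $x \mapsto \overline{x}$ is a well-defined linear isometry of $X$ into $A_\sigma(B_{X^\ast})$, and then to establish surjectivity as the substantive step. For well-definedness, fix $x \in X$. The map $x^\ast \mapsto x^\ast(x)$ is weak$^\ast$-continuous on $B_{X^\ast}$ by the very definition of the weak$^\ast$ topology, and it is affine (indeed linear) in $x^\ast$; moreover $\overline{x}(\sigma(x^\ast)) = (-x^\ast)(x) = -\overline{x}(x^\ast)$, so $\overline{x} = -\overline{x}\circ\sigma$ and hence $\overline{x} \in A_\sigma(B_{X^\ast})$. Linearity of $x \mapsto \overline{x}$ is immediate from the linearity of each $x^\ast$, and the isometry is the identity $\|\overline{x}\|_\infty = \sup_{x^\ast \in B_{X^\ast}} |x^\ast(x)| = \|x\|$, which is a standard consequence of the Hahn--Banach theorem. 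Thus the map is an isometric linear embedding of $X$ into $A_\sigma(B_{X^\ast})$.

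The crux is surjectivity. Given $a \in A_\sigma(B_{X^\ast})$, I would first observe that $a(0) = -a(0)$ forces $a(0) = 0$; combined with affineness this yields the homogeneity relation $a(\lambda x^\ast) = \lambda\, a(x^\ast)$ for $x^\ast \in B_{X^\ast}$ and $\lambda \in [0,1]$, while oddness extends it to $\lambda \in [-1,1]$. I would then define an extension $\tilde a : X^\ast \to \mathbb{R}$ by $\tilde a(x^\ast) = \lambda\, a(x^\ast/\lambda)$ for any $\lambda \geq \|x^\ast\|$. The homogeneity relation shows this is independent of the choice of $\lambda$, and a short computation using the midpoint identity $a(\tfrac{1}{2}(u^\ast+v^\ast)) = \tfrac{1}{2}(a(u^\ast)+a(v^\ast))$ on the convex set $B_{X^\ast}$, together with the homogeneity, shows that $\tilde a$ is linear. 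Since $a$ is continuous on the weak$^\ast$-compact set $B_{X^\ast}$ it is bounded there, and $\tilde a$ inherits the bound $\|\tilde a\| = \|a\|_\infty$, so $\tilde a$ is a bounded linear functional on $X^\ast$, that is, an element of $X^{\ast\ast}$.

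The main obstacle is to upgrade $\tilde a \in X^{\ast\ast}$ to an element of $X$, equivalently to prove that $\tilde a$ is weak$^\ast$-continuous on all of $X^\ast$, knowing only that its restriction to $B_{X^\ast}$ (which equals $a$) is weak$^\ast$-continuous. Here I expect to invoke the Krein--Smulian theorem: $\tilde a$ is weak$^\ast$-continuous precisely when $\ker \tilde a$ is weak$^\ast$-closed, and by Krein--Smulian the latter holds as soon as $\ker \tilde a \cap r B_{X^\ast}$ is weak$^\ast$-closed for every $r > 0$. By homogeneity $\tilde a$ is weak$^\ast$-continuous on each $r B_{X^\ast}$, so this intersection is the zero set of a continuous function on the weak$^\ast$-compact set $r B_{X^\ast}$, hence weak$^\ast$-closed. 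It follows that $\tilde a$ is weak$^\ast$-continuous, so $\tilde a = \overline{x}$ for some $x \in X$; restricting to $B_{X^\ast}$ gives $a = \overline{x}$, which completes surjectivity and hence the proof.
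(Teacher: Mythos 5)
Your proof is correct. Note that the paper does not actually prove this lemma---it is quoted verbatim from Lacey (Lemma~8, p.~213)---so there is no in-paper argument to compare against; your argument (the isometric embedding via Hahn--Banach, then surjectivity by extending an odd affine continuous function on $B_{X^\ast}$ homogeneously to a bounded linear functional on $X^\ast$ and upgrading its weak$^\ast$-continuity from $B_{X^\ast}$ to all of $X^\ast$ via the Krein--Smulian theorem) is the standard proof of this classical identification and is complete.
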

			
			\begin{rem}\label{R2.2.7}
				Let $X$ be a Banach space and $F \in \mc{K}(X)$. By Lemma~\ref{L2.2.6}, $F$ can be viewed as a compact subset of $A_\sigma(B_{X^\ast})$. Hence, due to the compactness of $F$, the supremum and infimum defining the functions $M_{F}$ and $m_{F}$ respectively are attained in $B_{X^\ast}$. In fact, for each $x^\ast \in B_{X^\ast}$, 
				\begin{equation}\label{Eq2.2.8}
					\begin{split}
						&M_{F}(x^\ast) = \max \{x^\ast(x): x \in F\}, \\ &m_{F}(x^\ast) = \min \{x^\ast(x): x \in F\}\mbox{ and }\\ &r_{F} = \frac{1}{2} \max \{M_{F}(x^\ast) -m_{F}(x^\ast):x^\ast \in B_{X^\ast}\}.\\
					\end{split}
				\end{equation}
				It is also clear from the latter part of the proof of Lemma~\ref{L2.2.4} and $(\ref{Eq2.2.8})$ that $r_{F} = \frac{1}{2} \textrm{diam}(F)$.
			\end{rem}
			
			We now recall a separation theorem in an $L_1$-predual space, which is proved by Ka-Sing Lau.
			\begin{thm}[{\cite[Theorem~2.3]{Lau}}]\label{T2.2.8}
				Let $X$ be an $L_1$-predual space. If $f$ is a weak$^\ast$-lower semicontinuous concave function on $B_{X^\ast}$ such that for each $x^\ast \in B_{X^\ast}$, $\frac{1}{2}(f(x^\ast)+f(-x^\ast)) \geq 0$, then there exists $a \in A_{\sigma}(B_{X^\ast})$ such that $f \geq a$.
			\end{thm}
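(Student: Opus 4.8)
The plan is to reduce the statement to a one-sided separation (sandwich) theorem on the weak$^\ast$ compact convex set $B_{X^\ast}$ and then use a symmetrization to produce the required odd function. First observe that the hypothesis is the natural necessary condition: if $a\in A_\sigma(B_{X^\ast})$ and $f\ge a$, then $\frac12\bigl(f(x^\ast)+f(-x^\ast)\bigr)\ge \frac12\bigl(a(x^\ast)+a(-x^\ast)\bigr)=0$, since $a=-a\circ\sigma$. For sufficiency I would set $g=-f\circ\sigma$, that is $g(x^\ast)=-f(-x^\ast)$. Because $\sigma$ is an affine weak$^\ast$ homeomorphism with $\sigma^2=\mathrm{id}$ and $f$ is concave and weak$^\ast$ lower semicontinuous, the function $g$ is convex and weak$^\ast$ upper semicontinuous, and the symmetry hypothesis $\frac12\bigl(f(x^\ast)+f(-x^\ast)\bigr)\ge 0$ is exactly the inequality $g\le f$ on $B_{X^\ast}$. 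So the target becomes: produce an affine weak$^\ast$ continuous $a$ on $B_{X^\ast}$ with $g\le a\le f$.

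Granting such an $a$, the odd function is obtained by averaging. Put $\tilde a=\frac12\bigl(a-a\circ\sigma\bigr)$; this is affine and weak$^\ast$ continuous and satisfies $\tilde a\circ\sigma=-\tilde a$, so $\tilde a\in A_\sigma(B_{X^\ast})$. Moreover $a\le f$ gives $a(x^\ast)\le f(x^\ast)$, while $g\le a$ evaluated at $-x^\ast$ gives $-f(x^\ast)=g(-x^\ast)\le a(-x^\ast)$, i.e. $-a(-x^\ast)\le f(x^\ast)$; averaging yields $\tilde a(x^\ast)=\frac12 a(x^\ast)-\frac12 a(-x^\ast)\le f(x^\ast)$. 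Hence $f\ge \tilde a$ with $\tilde a\in A_\sigma(B_{X^\ast})$, which is the conclusion. (One may also note that any $\overline{x}$ with $x\in X$ is automatically odd by Lemma~\ref{L2.2.6}, so oddness of the separator is in a sense free; the averaging device is simply the clean way to pass from a generic affine separator to one in $A_\sigma(B_{X^\ast})$.)

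The substantive step is the sandwich $g\le a\le f$, and this is where the $L_1$-predual hypothesis is essential. The set $B_{X^\ast}$ is itself \emph{not} a simplex: the cone it generates in $\mathbb{R}\times X^\ast$ is of ice-cream type, not a lattice, so Edwards' separation theorem cannot be applied to $B_{X^\ast}$ directly, and for a generic convex upper semicontinuous minorant below a concave lower semicontinuous majorant the sandwich can fail. The remedy uses that $X^\ast$ is isometric to an $L_1(\mu)$ space, hence an abstract $L$-space whose positive cone is a lattice; consequently the positive part $S=\{x^\ast\in B_{X^\ast}:x^\ast\ge 0\}$ is a weak$^\ast$ compact Choquet simplex, and a short computation with the lattice decomposition (where $\|x^\ast\|$ is additive on the cone) shows $B_{X^\ast}=\mathrm{conv}\bigl(S\cup(-S)\bigr)$; for the simplex theory and the AL-space correspondence see \cite{Lacey}. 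On $S$ one has $g\le f$ with $g|_S$ convex upper semicontinuous and $f|_S$ concave lower semicontinuous, so Edwards' separation theorem on the simplex $S$ provides an affine weak$^\ast$ continuous $h$ with $g\le h\le f$ on $S$; from $h$ one then builds the required $a$ on all of $B_{X^\ast}$, using the presentation $B_{X^\ast}=\mathrm{conv}\bigl(S\cup(-S)\bigr)$ together with concavity of $f$ to propagate the inequality off $S$.

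I expect the main obstacle to be precisely this middle step and, within it, the passage back from the simplex $S$ to all of $B_{X^\ast}$ while remaining inside weak$^\ast$ continuous affine functions. Two points need care: that the $L_1(\mu)$ order cone is weak$^\ast$ (i.e. $\sigma(X^\ast,X)$) closed so that $S$ is genuinely a weak$^\ast$ compact simplex and the form of Edwards' theorem valid for arbitrary, possibly non-metrizable, Choquet simplexes applies; and that the separator produced on $S$ extends to an affine weak$^\ast$ continuous function on $\mathrm{conv}\bigl(S\cup(-S)\bigr)$ without destroying continuity or affinity across the identifications forced by $S\cap(-S)$ (note $S$ is not a face of $B_{X^\ast}$, so no automatic extension is available). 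These are exactly the places where the abstract $L$-space structure of $X^\ast$ is consumed; by contrast the reduction $g\le f$ and the final averaging are purely formal. An alternative route that avoids the simplex reduction is to derive the same sandwich from the intersection property of balls characterizing $L_1$-preduals, but the Choquet-theoretic argument makes the role of the hypothesis most transparent.
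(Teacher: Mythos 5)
The paper does not actually prove this statement; it is imported verbatim from Lau's article \cite{Lau}, so I am judging your argument on its own terms. Your outer frame is sound: putting $g=-f\circ\sigma$ converts the hypothesis into $g\le f$ with $g$ convex and weak$^\ast$ upper semicontinuous, and the averaging $\tilde a=\tfrac12(a-a\circ\sigma)$ correctly turns any continuous affine $a$ with $g\le a\le f$ into an element of $A_\sigma(B_{X^\ast})$ lying below $f$. But the sandwich $g\le a\le f$ is the entire content of the theorem, and the route you propose for it does not go through. The order on $X^\ast$ is imported by an isometry $X^\ast\cong L_1(\mu)$ that has no a priori compatibility with the weak$^\ast$ topology $\sigma(X^\ast,X)$, so the positive part $S=\{x^\ast\in B_{X^\ast}:x^\ast\ge 0\}$ need not be weak$^\ast$ compact, and Edwards' separation theorem (which requires a compact simplex) is then unavailable. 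This is not merely a point ``needing care'': it genuinely fails. For the Gurarij space the extreme points of the dual ball are weak$^\ast$ dense in $B_{X^\ast}$, while Milman's theorem forces $\mathrm{ext}(B_{X^\ast})\subseteq S\cup(-S)$ whenever $B_{X^\ast}=\mathrm{conv}\bigl(S\cup(-S)\bigr)$ with $S$ weak$^\ast$ compact; since $S\cap(-S)=\{0\}$ in $L_1$, a connectedness argument on segments then contradicts density. So no weak$^\ast$ compact $S$ of the kind you need exists for every $L_1$-predual. Even granting compactness of $S$, your extension step is incomplete: the odd extension $a(\alpha s-(1-\alpha)t)=\alpha h(s)-(1-\alpha)h(t)$ is well defined only when $h(0)=0$, which Edwards' theorem does not supply, and its weak$^\ast$ continuity on $\mathrm{conv}\bigl(S\cup(-S)\bigr)$ requires the quotient-map argument you allude to but do not give.

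The statement has a short proof from a tool the paper already records, namely the Lazar--Lindenstrauss selection theorem (Theorem~\ref{T2.2.80}). Define $\Phi(x^\ast)=[-f(-x^\ast),\,f(x^\ast)]$ for $x^\ast\in B_{X^\ast}$. It is nonempty precisely by the hypothesis $\tfrac12\bigl(f(x^\ast)+f(-x^\ast)\bigr)\ge 0$, closed convex valued, symmetric since $\Phi(-x^\ast)=-\Phi(x^\ast)$ by construction, convex as a set-valued map because $f$ is concave and $-f\circ\sigma$ is convex, and weak$^\ast$ lower semicontinuous because $f$ is lower semicontinuous, $-f\circ\sigma$ is upper semicontinuous, and the intervals are nonempty everywhere. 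Any selection $a\in A_\sigma(B_{X^\ast})$ satisfies $a\le f$, which is the conclusion, with no separate symmetrization needed. This is exactly the mechanism the paper itself deploys in the proof of Theorem~\ref{T2.3.4}, and it is the argument you should give here.
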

			
			Now, for a Banach space $X$, if $V \in \mc{CV}(X)$, then the following result tells us that the map $B \mapsto \emph{rad}_{V}(B)$, for $B \in \mc{CB}(X)$ is continuous on $\mc{CB}(X)$ in the Hausdorff metric. The following result is proved in \cite[Theorem~2.5]{SoTa}. We include the proof here for the sake of completeness. 
			
			\begin{lem}\label{L2.2.9}
				Let $X$ be a Banach space, $V \in \mc{CV}(X)$ and $B_{1},B_{2} \in \mc{CB}(X)$. Then for each $v \in V$, $\vert r(v,B_{1})-r(v,B_{2})\vert \leq d_{H}(B_{1},B_{2})$ and $\vert\textrm{rad}_{V}(B_{1})-\textrm{rad}_{V}(B_{2})\vert \leq d_{H}(B_{1},B_{2}).$
			\end{lem}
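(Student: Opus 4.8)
The plan is to derive the radius inequality directly from the pointwise one, so the heart of the matter is the first claim. Fix $v \in V$ and let $a > d_{H}(B_{1},B_{2})$ be arbitrary; by the definition of the Hausdorff metric we then have $B_{1} \ci B_{2} + aB(0,1)$ and $B_{2} \ci B_{1} + aB(0,1)$.

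First I would estimate $r(v,B_{1})$ in terms of $r(v,B_{2})$. Given any $b_{1} \in B_{1}$, the inclusion $B_{1} \ci B_{2} + aB(0,1)$ produces $b_{2} \in B_{2}$ and $w \in X$ with $\|w\| < a$ such that $b_{1} = b_{2} + w$. The triangle inequality then gives $\|v-b_{1}\| \leq \|v-b_{2}\| + \|w\| < r(v,B_{2}) + a$. Taking the supremum over $b_{1} \in B_{1}$ yields $r(v,B_{1}) \leq r(v,B_{2}) + a$. By symmetry, using the reverse inclusion $B_{2} \ci B_{1} + aB(0,1)$, one gets $r(v,B_{2}) \leq r(v,B_{1}) + a$, and hence $\vert r(v,B_{1}) - r(v,B_{2})\vert \leq a$. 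Since $a > d_{H}(B_{1},B_{2})$ was arbitrary, letting $a \to d_{H}(B_{1},B_{2})^{+}$ gives the first inequality.

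For the second inequality I would simply pass to the infimum over $V$. The pointwise bound $r(v,B_{1}) \leq r(v,B_{2}) + d_{H}(B_{1},B_{2})$, valid for every $v \in V$, gives, upon taking the infimum over $v \in V$, that $\textrm{rad}_{V}(B_{1}) \leq \textrm{rad}_{V}(B_{2}) + d_{H}(B_{1},B_{2})$; interchanging the roles of $B_{1}$ and $B_{2}$ yields the reverse estimate, and together these give $\vert \textrm{rad}_{V}(B_{1}) - \textrm{rad}_{V}(B_{2})\vert \leq d_{H}(B_{1},B_{2})$.

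The argument is entirely elementary, so there is no substantial obstacle; the only points requiring a little care are the open-ball convention in the definition of $d_{H}$ (which is why I work with an arbitrary $a > d_{H}(B_{1},B_{2})$ and then pass to the limit, rather than trying to use $a = d_{H}(B_{1},B_{2})$ directly, since the defining infimum need not be attained) and the observation that the additive constant $d_{H}(B_{1},B_{2})$ is unaffected by the infimum over $V$.
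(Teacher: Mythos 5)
Your proposal is correct and follows essentially the same route as the paper's proof: a triangle-inequality estimate giving $r(v,B_1)\leq r(v,B_2)+d_H(B_1,B_2)$ (you via the inclusion $B_1\subseteq B_2+aB(0,1)$ for $a>d_H(B_1,B_2)$, the paper via choosing $b_2$ with $\|b_1-b_2\|<d_H(B_1,B_2)+\varepsilon$), followed by symmetry and passing to the infimum over $v\in V$. The care you take with the open-ball convention is a minor presentational difference, not a change of method.
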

			\begin{proof}
				Let $v \in V$. Now, let $b_1 \in B_{1}$ and $\varepsilon>0$. Choose $b_2 \in B_{2}$ such that $\|b_1-b_2\|<d_{H}(B_{1},B_{2})+\varepsilon$. Then \[\|v-b_1\| \leq \|v-b_2\|+\|b_2-b_1\|<r(v,B_{2})+d_{H}(B_{1},B_{2})+\varepsilon.\] Since $\varepsilon$ is arbitrary, it follows that 
				\begin{equation}\label{Eq2.2.10}
					r(v,B_{1}) \leq r(v,B_{2})+d_{H}(B_{1},B_{2}).
				\end{equation} 
				Further, after swapping $B_{1}$ with $B_{2}$ in the above argument, we obtain the following inequality. 
				\begin{equation}\label{Eq2.2.11}
					r(v,B_{2}) \leq r(v,B_{1})+d_{H}(B_{1},B_{2}).
				\end{equation} 
				The first conclusion of the result follows from the inequalities in $(\ref{Eq2.2.10})$ and $(\ref{Eq2.2.11})$.
				
				The inequalities in $(\ref{Eq2.2.10})$ and $(\ref{Eq2.2.11})$ hold true for every $v \in V$ and hence, the final conclusion of the result follows. 
			\end{proof}
			
			Let us recall the definition of a convex symmetric lower semicontinuous set-valued function.
			\begin{Def}[\cite{LL}]\label{D2.2.12}
				Let $C$ be a convex subset of an \textit{lctvs} and $E$ be another \textit{lctvs}. Let $\Phi$ be a set-valued map from $C$ to the family of non-empty convex subsets of $E$.
				The map $\Phi$ is said to be
				\begin{enumerate}[{(i)}]
					\item a {\it convex function }on $C$ if for each $c \in C$, $\Phi(c)$ is a non-empty convex set and for each $0 < \al<1$ and $c_1,c_2 \in C$, \[\al \Phi(c_1)+(1-\al)\Phi(c_2) \ci \Phi(\al c_1 + (1-\al)c_2);\]
					\item a {\it lower semicontinuous function} on $C$ if for each open subset $U$ of $E$, the set $\{c \in C: \Phi(c) \cap U \neq \es\}$ is a relatively open subset of $C$ and
					\item a {\it symmetric function }on $C$ if for each $c,-c \in C$, $\Phi(-c) = -\Phi(c)$.
				\end{enumerate}
			\end{Def}
			
			We conclude this section by recalling a selection theorem by Lazar and Lindenstrauss. We state it according to our purpose in the following manner.
			\begin{thm}[{\cite[Theorem~2.2]{LL}}]\label{T2.2.80}
				Let $X$ be an $L_1$-predual space and $Y$ be another Banach space. Let $\Phi: B_{X^\ast} \rightarrow \mc{CV}(Y)$ be a convex symmetric weak$^\ast$-lower semicontinuous set-valued function on $B_{X^\ast}$. Then $\Phi$ admits a selection from $A_{\sigma}(B_{X^\ast})$, that is, there exists $a \in A_{\sigma}(B_{X^\ast})$ such that for each $x^\ast \in B_{X^\ast}$, $a(x^\ast) \in \Phi(x^\ast)$.
			\end{thm}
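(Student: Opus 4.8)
The plan is to bootstrap from Lau's scalar separation theorem (Theorem~\ref{T2.2.8}): first settle $Y=\mb{R}$, then finite-dimensional $Y$ by induction on the dimension, and finally an arbitrary Banach space $Y$ by a geometric successive-approximation argument. The entire use of the $L_1$-predual hypothesis is concentrated in the scalar step, so let me describe that carefully. When $Y=\mb{R}$, each value $\Phi(x^\ast)$ is a non-empty closed interval, so writing $\ell(x^\ast)=\inf\Phi(x^\ast)$ and $u(x^\ast)=\sup\Phi(x^\ast)$, convexity of $\Phi$ forces $u$ to be concave, while lower semicontinuity of $\Phi$ applied to the open set $(a,\iy)$ shows $\{x^\ast:u(x^\ast)>a\}$ is weak$^\ast$-open, i.e. $u$ is weak$^\ast$-lower semicontinuous. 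The symmetry $\Phi(-x^\ast)=-\Phi(x^\ast)$ gives $u(-x^\ast)=-\ell(x^\ast)$, whence
\[
\tfrac12\big(u(x^\ast)+u(-x^\ast)\big)=\tfrac12\big(u(x^\ast)-\ell(x^\ast)\big)\ge 0 .
\]
Thus $u$ meets the hypotheses of Theorem~\ref{T2.2.8} and yields $a\in A_\sigma(B_{X^\ast})$ with $a\le u$. The decisive observation is that oddness of $a$ then supplies the lower bound for free: from $a(-x^\ast)=-a(x^\ast)\le u(-x^\ast)=-\ell(x^\ast)$ we get $a\ge\ell$, so $\ell(x^\ast)\le a(x^\ast)\le u(x^\ast)$ and $a$ is a selection. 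Values $u(x^\ast)=+\iy$ cause no difficulty and may be removed by truncation.

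\textbf{Finite-dimensional $Y$.} I would induct on $n=\dim Y$, the case $n=1$ being the scalar step. For $Y=\mb{R}^n$ let $\pi:\mb{R}^n\ra\mb{R}^{n-1}$ be a coordinate projection and set $\Phi_1=\pi\circ\Phi$. Since $\pi$ is linear and open, $\Phi_1$ is again convex, symmetric and weak$^\ast$-lower semicontinuous, so the inductive hypothesis gives an odd affine selection $a_1$ of $\Phi_1$. Because $a_1(x^\ast)\in\pi(\Phi(x^\ast))$, the fibre $\Phi_2(x^\ast)=\{t\in\mb{R}:(a_1(x^\ast),t)\in\Phi(x^\ast)\}$ is a non-empty closed interval, and the oddness of $a_1$ together with the symmetry of $\Phi$ gives $\Phi_2(-x^\ast)=-\Phi_2(x^\ast)$. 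The scalar case applied to $\Phi_2$ produces an odd affine $a_2$, and $x^\ast\mapsto(a_1(x^\ast),a_2(x^\ast))$ selects $\Phi$.

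\textbf{General $Y$.} Here I would run a halving scheme. Starting from an affine odd approximate selection (one exists after a first partition-of-unity construction on the weak$^\ast$-compact space $B_{X^\ast}$ gives a bounded near-selection), suppose $s\in A_\sigma(B_{X^\ast})$ satisfies $d(s(x^\ast),\Phi(x^\ast))\le\delta$ for all $x^\ast$. Translating, $\Psi=\Phi-s$ is convex, symmetric and weak$^\ast$-lower semicontinuous with $d(0,\Psi(x^\ast))<\delta$. Using compactness and lower semicontinuity I extract a finite cover and a subordinate partition of unity, build a continuous near-selection $g$ with values in a fixed finite-dimensional subspace, and symmetrize it: $\widetilde g(x^\ast)=\tfrac12\big(g(x^\ast)-g(-x^\ast)\big)$ is, by convexity and the identity $-\Psi(-x^\ast)=\Psi(x^\ast)$, an \emph{odd} continuous selection up to error $\delta/4$ with finite-dimensional range. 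Feeding the associated finite-dimensional convex symmetric lower semicontinuous map into the previous case yields an odd \emph{affine} $a'$ with $\|a'\|\le C\delta$ and $d(a'(x^\ast),\Psi(x^\ast))\le\delta/2$; replacing $s$ by $s+a'$ halves the error, and summing the resulting series gives a uniformly convergent sequence in $A_\sigma(B_{X^\ast})$ whose limit is an exact selection.

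\textbf{Main obstacle.} The points I expect to absorb most of the effort are the preservation of weak$^\ast$-lower semicontinuity under the two reduction operations — passing to the fibre $\Phi_2$ in the dimension induction, and the trimming/finite-dimensional reduction in the approximation step — since lower semicontinuity of set-valued maps is fragile under sections and intersections and its verification must exploit convexity of the values in an essential way. By contrast, the affineness, which is what makes the statement genuinely stronger than a Michael-type selection, is delivered cleanly in each scalar instance by the oddness trick built on Theorem~\ref{T2.2.8}.
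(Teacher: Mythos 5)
First, a point of order: the paper does not prove Theorem~\ref{T2.2.80} at all --- it is imported verbatim from Lazar and Lindenstrauss \cite[Theorem~2.2]{LL} --- so there is no in-paper proof to compare yours against, and I can only assess the proposal on its own terms. Your scalar step is correct and is, in substance, exactly the device the paper uses where it bypasses the selection theorem: in Theorems~\ref{T2.3.1} and~\ref{T2.3.3} the author applies Lau's separation theorem to the concave upper envelope and recovers the lower bound from oddness, just as you do. It is also worth noting that every invocation of Theorem~\ref{T2.2.80} in this paper has $Y=\mb{R}$ with compact-interval values (the maps $\Phi_n$ in Theorem~\ref{T2.3.4}), so your scalar case alone would suffice for the paper's purposes. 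One local error: the claim that unbounded values are ``removed by truncation'' fails as stated, since replacing $u$ by $\min\{u,M\}$ can destroy the hypothesis $\tfrac12(f(x^\ast)+f(-x^\ast))\ge 0$ of Theorem~\ref{T2.2.8} (e.g.\ if $\Phi(x^\ast)=[10^6,10^6+1]$ and $M=100$, then $\min\{u,M\}(x^\ast)+\min\{u,M\}(-x^\ast)=100-10^6<0$).

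As a proof of the theorem as stated, however, there are genuine gaps, and they sit precisely where you park them. In the dimension induction you give no argument that the fibre map $\Phi_2$ is lower semicontinuous, and this is not a technicality one can defer: fibring a lower semicontinuous convex-valued map over a continuous selection of its projection destroys lower semicontinuity in general. Take $K=[0,1]$, $\Phi(x)=\textrm{conv}\{(x,0),(0,1)\}\ci\mb{R}^2$ (continuous in the Hausdorff metric, closed convex values), $a_1\equiv 0$, which selects $\pi\circ\Phi$ since $\pi(\Phi(x))=[0,x]$; then $\Phi_2(x)=\{1\}$ for $x>0$ while $\Phi_2(0)=[0,1]$, so $\Phi_2$ is not lower semicontinuous at $0$. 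Any rescue must exploit the stronger map-convexity of Definition~\ref{D2.2.12} \emph{and} arrange that $a_1(x^\ast)$ stays in the relative interior of $\pi(\Phi(x^\ast))$ --- but the inductive hypothesis hands you an arbitrary odd affine selection with no such control, and $\pi(\Phi(x^\ast))$ need not even be closed when the values of $\Phi$ are unbounded. The general-$Y$ step inherits this gap and adds its own: the ``associated finite-dimensional convex symmetric lower semicontinuous map'' is never defined, and the natural candidates fail ($\Psi(x^\ast)\cap E$ may be empty for the finite-dimensional range space $E$ of $\widetilde g$, while $\Psi(x^\ast)$ intersected with a closed ball around $\widetilde g(x^\ast)$ is neither finite-dimensionally valued nor lower semicontinuous --- intersection with closed balls is exactly the operation Michael's proof has to avoid). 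A sketch that defers its self-declared main obstacle is not yet a proof; the actual argument of \cite{LL} does not proceed by this fibring induction.
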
 
			
			\section{Main Results}\label{sec2}
			\subsection{Chebyshev centers in $M$-ideals in $C(K)$}\label{sec2.30}
			Let $K$ be a compact Hausdorff space. We use the following notation for $M$-ideals in $C(K)$ in the discussion that follows. Consider a subset $D$ of $K$ and define $J_{D} = \{h \in C(K):h(t) = 0\mbox{, for each }t \in D \}.$ From \cite[Example~$1.4$~(a), p.~$3$]{HWW}, $J$ is an $M$-ideal in $C(K)$ if and only if there exists a closed subset $D$ of $K$ such that $J=J_{D}$ and $J$ is an $M$-summand in $C(K)$ if and only if there exists a clopen subset (in other words, a subset which is open as well as closed) $D$ of $K$ such that $J=J_{D}$. 
			
			We now make the following easy observation.
			\begin{prop}\label{P2.3.7}
				Let $K$ be a compact Hausdorff space and $D$ be a closed subset of $K$. Then the subspace $J_{D}$ of $C(K)$ admits centers for $\mc{K}(J_{D})$ and for each $F\in \mc{K}(J_{D})$,
				\begin{equation}
					\textrm{cent}_{J_{D}}(F) = \{h \in J_{D}: M_{F}-r_{F} \leq h \leq m_{F}+r_{F}\}.
				\end{equation} 
				Furthermore, $\textrm{rad}_{J_{D}}(F)=r_{F}$.
			\end{prop}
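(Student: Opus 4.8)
The plan is to reduce everything to exhibiting a single explicit restricted Chebyshev center, after which Proposition~\ref{P2.2.5} supplies the description for free. First I would record the basic features of the data. Since $F \in \mc{K}(J_D)$ is compact, Remark~\ref{R2.2.3}~\textit{(ii)} gives that $M_F$ and $m_F$ are continuous on $K$ with $N_F = M_F$ and $n_F = m_F$, and by definition $r_F = \frac{1}{2}\sup\{M_F(t) - m_F(t) : t \in K\} \geq 0$. Writing $\phi = M_F - r_F$ and $\psi = m_F + r_F$, the pointwise inequality $M_F - m_F \leq 2 r_F$ shows $\phi \leq \psi$ on $K$. Moreover, since every $b \in F$ lies in $J_D$ and hence vanishes on $D$, we have $M_F \equiv m_F \equiv 0$ on $D$, so that $\phi = -r_F \leq 0 \leq r_F = \psi$ at every point of $D$.

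The heart of the argument is to produce $h \in J_D$ with $\phi \leq h \leq \psi$ on $K$. I would take the pointwise clamp of the constant $0$ into the band $[\phi,\psi]$, namely $h = \max\{\phi, \min\{0, \psi\}\}$. Being built from $\max$ and $\min$ of continuous functions, $h$ is continuous; since $\phi \leq \psi$, the clamp of $0$ into $[\phi(t),\psi(t)]$ of course lands in that interval, so $\phi \leq h \leq \psi$ everywhere, and on $D$, where $\phi \leq 0 \leq \psi$, the clamp returns $0$. Hence $h|_D = 0$, i.e.\ $h \in J_D$. The bound $\phi \leq h \leq \psi$ unwinds to $M_F(t) - r_F \leq h(t) \leq m_F(t) + r_F$, which for each $t \in K$ and $b \in F$ forces $|h(t) - b(t)| \leq r_F$; taking the supremum gives $r(h, F) \leq r_F$ and therefore $\textrm{rad}_{J_D}(F) \leq r_F$.

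Combining this with $r_F \leq \textrm{rad}_{J_D}(F)$ from Remark~\ref{R2.2.3}~\textit{(iii)} yields $\textrm{rad}_{J_D}(F) = r_F$, and since $r(h,F) \leq r_F = \textrm{rad}_{J_D}(F)$ the element $h$ is a restricted Chebyshev center; thus $\textrm{cent}_{J_D}(F) \neq \es$ and $J_D$ admits centers for $\mc{K}(J_D)$. Finally, now that $\textrm{cent}_{J_D}(F) \neq \es$, I would invoke Proposition~\ref{P2.2.5} with $\mc{A} = J_D$ and $B = F$: using $N_F = M_F$, $n_F = m_F$ and $\textrm{rad}_{J_D}(F) = r_F$, it gives exactly $\textrm{cent}_{J_D}(F) = \{h \in J_D : M_F - r_F \leq h \leq m_F + r_F\}$, which is the claimed formula.

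I expect no serious obstacle here; the only point requiring care is guaranteeing that the continuous selection $h$ can be arranged to vanish on $D$ while staying in the band $[\phi,\psi]$. This is precisely where the two structural facts enter: the compactness of $F$ (continuity of $M_F, m_F$, so that $\phi,\psi$ are genuine continuous bounding functions) and the vanishing of $F$ on $D$ (so that $0$ lies in $[\phi,\psi]$ throughout $D$), which together make the elementary clamp construction succeed without any separate approximation or selection-theorem machinery.
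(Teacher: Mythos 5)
Your proof is correct and follows essentially the same route as the paper: exhibit one explicit continuous function in the band $[M_F-r_F,\, m_F+r_F]$ that vanishes on $D$, deduce $\textrm{rad}_{J_D}(F)=r_F$ via Remark~\ref{R2.2.3}~\textit{(iii)}, and then apply Proposition~\ref{P2.2.5}. The only difference is the choice of witness --- you clamp $0$ into the band, while the paper simply takes the midpoint $\tfrac{1}{2}(M_F+m_F)$, which also vanishes on $D$ and lies in the band since $M_F-m_F\leq 2r_F$ pointwise.
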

			\begin{proof}
				Let $F \in \mc{K}(J_{D})$. From Remark~\ref{R2.2.3} \textit{(ii)}, $M_F$ and $m_F$ are continuous functions on $K$. Clearly, for each $t \in D$, $M_{F}(t) = 0 = m_{F}(t).$ Therefore, $\frac{M_{F}+m_{F}}{2} \in \{h \in J_{D}: M_{F}-r_{F} \leq h \leq m_{F}+r_{F}\}$. It follows that $\emph{rad}_{J_{D}}(F) \leq r_{F}$. Now, from Remark~\ref{R2.2.3} \textit{(iii)} and Proposition~\ref{P2.2.5}, it follows that $\emph{rad}_{J_{D}}(F)=r_{F}$ and \[\emph{cent}_{J_{D}}(F) = \{h \in J_{D}: M_{F}-r_{F} \leq h \leq m_{F}+r_{F}\}.\]
			\end{proof}	
			
			Before proceeding to the next theorem, we need the following technical result. It is a variant of the classical insertion type theorem.
			
			\begin{lem}\label{L2.3.8}
				Let $D$ be a clopen subset of a compact Hausdorff space $K$. If $g,-f$ are upper semicontinuous functions on $K$ such that $g \leq f$ on $K$ and for each $t \in D$, $g(t) \leq \al \leq f(t)$, then there exists $h \in C(K)$ such that $g\leq h \leq f$ on $K$ and for each $t \in D$, $h(t) =\alpha$.
			\end{lem}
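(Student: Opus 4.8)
The plan is to reduce the statement to the classical insertion theorem by exploiting that $D$ is clopen, so that $K$ splits as the disjoint union of the two clopen, hence compact Hausdorff, hence normal pieces $D$ and $K\setminus D$, and continuity of the function to be constructed can then be checked separately on each piece.

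First I would recall the classical Kat\v{e}tov--Tong insertion theorem: on a normal space, whenever an upper semicontinuous function lies pointwise below a lower semicontinuous function, a continuous function can be inserted between them. This applies in our setting because $-f$ upper semicontinuous means $f$ is lower semicontinuous, while $g$ is upper semicontinuous with $g \leq f$; and since $K$ is compact Hausdorff it is normal, as is its closed subspace $K\setminus D$.

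Then I would build $h$ piecewise. On $D$, set $h\equiv\alpha$; the hypothesis $g(t)\leq\alpha\leq f(t)$ for $t\in D$ guarantees $g\leq h\leq f$ there, and trivially $h=\alpha$ on $D$. On the normal space $K\setminus D$, the restrictions $g|_{K\setminus D}$ and $f|_{K\setminus D}$ are respectively upper and lower semicontinuous with $g\leq f$, so the insertion theorem yields a continuous $h_0\colon K\setminus D\to\mathbb{R}$ with $g\leq h_0\leq f$ on $K\setminus D$. I then define $h=\alpha$ on $D$ and $h=h_0$ on $K\setminus D$. The only point that remains, and the single place where clopenness is essential, is that this glued $h$ is continuous on $K$: because $D$ and $K\setminus D$ are both open, $h$ coincides with a continuous function on each member of the finite open cover $\{D,\,K\setminus D\}$, and a map continuous on each set of a finite open cover is continuous. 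Crucially there is no common boundary on which the constant $\alpha$ and $h_0$ would have to agree, which is exactly why no compatibility condition between the two pieces is needed. This produces $h\in C(K)$ with $g\leq h\leq f$ on $K$ and $h|_D\equiv\alpha$, as required.

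I would expect no serious obstacle: once the classical insertion theorem is invoked, the entire argument is bookkeeping on the clopen decomposition. If one prefers a single application of the insertion theorem on all of $K$ rather than gluing, one can instead replace $g,f$ by $\tilde g,\tilde f$ that equal $\alpha$ on $D$ and equal $g,f$ respectively on $K\setminus D$; clopenness of $D$ ensures $\tilde g$ is still upper semicontinuous and $\tilde f$ still lower semicontinuous (semicontinuity is local and the pieces are open), while $\tilde g\leq\tilde f$ holds everywhere. Any continuous $h$ inserted between $\tilde g$ and $\tilde f$ is then forced to equal $\alpha$ on $D$ and to lie between $g$ and $f$ on $K\setminus D$, giving the same conclusion.
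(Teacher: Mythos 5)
Your proof is correct. Your primary route differs mildly from the paper's: you apply the Kat\v{e}tov--Tong insertion theorem only on the clopen (hence compact, hence normal) piece $K\setminus D$ and then glue the resulting $h_0$ with the constant $\alpha$ on $D$, using openness of both pieces to get continuity of the glued function with no boundary compatibility to check. The paper instead performs a single insertion on all of $K$: it introduces two auxiliary step functions $h_r, h_s$ (equal to $\alpha$ on $D$ and to suitable constants $r,s$ off $D$), notes that clopenness of $D$ makes $h_s$ upper and $h_r$ lower semicontinuous, and inserts a continuous $h$ between $\max\{h_s,g\}$ and $\min\{h_r,f\}$, which coincide and equal $\alpha$ on $D$, thereby forcing $h=\alpha$ there. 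Your closing remark --- replacing $g,f$ by $\tilde g,\tilde f$ equal to $\alpha$ on $D$ and invoking the insertion theorem once --- is essentially the paper's argument in a cleaner guise, since $\max\{h_s,g\}$ and $\min\{h_r,f\}$ play exactly the role of your $\tilde g$ and $\tilde f$. Both approaches use the same key tool and the same exploitation of clopenness; yours trades the construction of auxiliary envelopes for a gluing step, which is arguably more transparent, while the paper's avoids discussing restriction of semicontinuity and normality of the subspace. The only cosmetic omission in your write-up is the trivial degenerate case $K\setminus D=\emptyset$ (take $h\equiv\alpha$), which the paper likewise dismisses by assuming $D$ is a proper subset.
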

			
			\begin{proof}
				Without loss of generality, we assume that $D$ is a non-empty proper subset of $K$. Let $r > s$ be such that \[r \geq \sup \{g(t), \al: t \in K\}\quad\mbox{and}\quad s \leq \inf \{f(t), \al: t \in K\}.\]
				Let us define the functions $h_r$ and $h_s$ on $K$ in the following manner.
				\begin{equation*}
					h_r(t)=
					\begin{dcases}
						r, & \mbox{if }t \in K \backslash D; \\
						\al, & \mbox{if }t\in D. 
					\end{dcases}
					\quad\mbox{and}\quad
					h_s(t)=
					\begin{dcases}
						s, & \mbox{if }t \in K \backslash D; \\
						\al, & \mbox{if }t \in D. 
					\end{dcases}	
				\end{equation*}
				Now, let $t_0 \in K$. Then $t_{0} \in D$ or $t_0 \in K \backslash D$. Since $D$ is clopen, in each of above cases, it is easy to see that $\limsup_{t \rightarrow t_0} h_s(t) \leq h_{s}(t_0).$ Therefore, $h_s$ is an upper semicontinuous function on $K$. Similarly, we can prove that $-h_r$ is also an upper semicontinuous function on $K$. Hence, $-\min \{h_r,f\}$ and $\max \{h_s, g\}$ are also upper semicontinuous functions on $K$ such that $\max \{h_s, g\} \leq \min \{h_r,f\}.$ Therefore, by Kat\v{e}tov's insertion theorem (see \cite{Katetov}), there exists $h \in C(K)$ such that $\max \{h_s, g\} \leq h \leq \min \{h_r,f\}.$ Further, for each $t \in D$, \[\max \{h_s(t), g(t)\} = \al \leq  h(t) \leq \min \{h_r(t),f(t)\} = \al.\] This implies for each $t \in D$, $h(t)=\al$. 
			\end{proof}	
			
			\begin{thm}\label{T2.3.9}
				Let $K$ be a compact Hausdorff space and $J$ be an $M$-summand in $C(K)$. Then $J$ admits centers for $\mc{CB}(J)$ and for each $B \in \mc{CB}(J)$, 
				\begin{equation}\label{eqn6.0}
					\textrm{cent}_{J}(B) = \{h \in J: N_{B}-r_{B} \leq h \leq n_{B}+r_{B}\}.
				\end{equation} 
				Furthermore, $\textrm{rad}_{J}(B)=r_{B}$.
			\end{thm}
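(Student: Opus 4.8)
The plan is to manufacture a single restricted Chebyshev center lying in $J$ by means of the insertion Lemma~\ref{L2.3.8}, and then to promote this one element to the full description of $\textrm{cent}_{J}(B)$ via Proposition~\ref{P2.2.5}. First I would invoke the structural fact recorded at the start of this subsection: since $J$ is an $M$-summand in $C(K)$, there is a \emph{clopen} set $D \ci K$ with $J = J_{D}$, so every $b \in B$ vanishes on $D$. I would also note at the outset that Remark~\ref{R2.2.3}~\textit{(iii)} (applied to the subspace $\mc{A}=J$) already gives $r_{B} \leq \textrm{rad}_{J}(B)$, so it only remains to establish the reverse inequality together with $\textrm{cent}_{J}(B) \neq \es$.

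The core of the argument is to produce an $h \in J$ satisfying $N_{B}-r_{B} \leq h \leq n_{B}+r_{B}$ on $K$. Set $g = N_{B}-r_{B}$ and $f = n_{B}+r_{B}$. By Remark~\ref{R2.2.3}~\textit{(i)}, $N_{B}$ is upper semicontinuous and $n_{B}$ is lower semicontinuous, so $g$ and $-f$ are both upper semicontinuous, matching the hypotheses of Lemma~\ref{L2.3.8}; the pointwise inequality $g \leq f$ follows immediately from $N_{B}-n_{B} \leq 2r_{B}$. The step I expect to demand the most care is checking the boundary condition on $D$ with $\al = 0$: because $D$ is \emph{open}, each $t \in D$ has a neighbourhood contained in $D$, on which both $m_{B}$ and $M_{B}$ vanish (as every $b \in B$ is zero there); hence $n_{B}(t) = N_{B}(t) = 0$ for $t \in D$, and so $g(t) = -r_{B} \leq 0 \leq r_{B} = f(t)$, using $r_{B} \geq 0$. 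Applying Lemma~\ref{L2.3.8} with $\al = 0$ then yields $h \in C(K)$ with $g \leq h \leq f$ on $K$ and $h \equiv 0$ on $D$, i.e. $h \in J_{D} = J$.

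It then remains to verify that this $h$ is a genuine center. Pointwise one has $N_{B} \geq M_{B} \geq b$ and $n_{B} \leq m_{B} \leq b$ for every $b \in B$ (the outer inequalities coming from the semicontinuity of $M_{B}$ and $m_{B}$, the inner ones from the definitions of $M_{B},m_{B}$), so the sandwich $N_{B}-r_{B} \leq h \leq n_{B}+r_{B}$ forces $|h(t)-b(t)| \leq r_{B}$ for all $t \in K$ and $b \in B$, whence $r(h,B) \leq r_{B}$. Combined with $r_{B} \leq \textrm{rad}_{J}(B) \leq r(h,B)$ this gives $\textrm{rad}_{J}(B) = r_{B}$ and $h \in \textrm{cent}_{J}(B)$, establishing that $J$ admits centers for $\mc{CB}(J)$ and proving the ``furthermore'' clause.

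Finally, since $\textrm{cent}_{J}(B) \neq \es$ and $\textrm{rad}_{J}(B) = r_{B}$, I would substitute $\textrm{rad}_{J}(B) = r_{B}$ into the formula of Proposition~\ref{P2.2.5} (with $\mc{A}=J$) to obtain exactly the claimed identity~\eqref{eqn6.0}. In summary, the only nontrivial ingredient is the construction of $h$, which hinges on two observations that fit Lemma~\ref{L2.3.8} precisely: that the lower envelope $N_{B}$ and upper envelope $n_{B}$ carry the correct semicontinuity, and that the openness of $D$ collapses both of them to $0$ on $D$, allowing the constraint $h|_{D}=0$ to be inserted.
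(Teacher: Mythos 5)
Your proposal is correct and follows essentially the same route as the paper: write $J=J_{D}$ for a clopen $D$, observe that $N_{B}$ and $n_{B}$ vanish on $D$, apply the insertion Lemma~\ref{L2.3.8} with $\al=0$ to produce $h\in J_{D}$ with $N_{B}-r_{B}\leq h\leq n_{B}+r_{B}$, and then combine Remark~\ref{R2.2.3}~\textit{(iii)} with Proposition~\ref{P2.2.5} to get $\textrm{rad}_{J}(B)=r_{B}$ and the description \eqref{eqn6.0}. The only cosmetic difference is that you verify $r(h,B)\leq r_{B}$ directly from the pointwise inequalities $N_{B}\geq M_{B}$ and $n_{B}\leq m_{B}$, whereas the paper leaves this to the converse direction of Proposition~\ref{P2.2.5}.
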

			\begin{proof}
				Since $J$ is an $M$-summand in $C(K)$, there exists a clopen subset $D$ of $K$ such that $J= J_D$. Let $B \in \mc{CB}(J_{D})$. Clearly, for each $t \in D$, $M_{B}(t) = 0 = m_{B}(t).$ Using the assumption that $D$ is clopen in $K$ and by the definitions of the functions $N_{B}$ and $-n_{B}$, it follows that for each $t \in D$, $N_{B}(t) =0=n_{B}(t)$. Hence for each $t \in D$, \[N_{B}(t)-r_{B} \leq 0 \leq n_{B}(t)+r_{B}.\] By Remark~\ref{R2.2.3} \textit{(i)} and Lemma~\ref{L2.3.8}, there exists $h \in J_D$ such that $N_{B}-r_{B} \leq h \leq n_{B}+r_{B}$. This shows that the set on the right-hand side of $($\ref{eqn6.0}$)$ is non-empty. Hence, it follows that $\emph{rad}_{J_{D}}(B) \leq r_{B}$. Therefore, from Remark~\ref{R2.2.3} \textit{(iii)} and Proposition~\ref{P2.2.5}, it follows that $\emph{rad}_{J_{D}}(B)=r_{B}$ and \[\emph{cent}_{J_{D}}(B)=\{h \in J_{D}: N_{B}-r_{B} \leq h \leq n_{B}+r_{B}\}.\]
			\end{proof}
			
			Consider the notations and hypotheses of Theorem~\ref{T2.3.9}. For a $B \in \mc{CB}(C(K))$, even though $\emph{cent}_{J_{D}}(B) \neq \es$, it is not necessary that the set $\emph{cent}_{J_{D}}(B)$ has a description as given in $(\ref{eqn6.0})$. It is possible to construct many examples to illustrate this fact. We provide one such example below.
			
			\begin{ex}\label{Ex2.3.11}
				Consider the subspace $J = \{h \in C(\{0,1\}): h(0)=0\}$ of $C(\{0,1\})$. Define the functions $f,g: \{0,1\} \rightarrow \mb{R}$ as $f(0) = 2$, $f(1) = 0$, $g(0)=3$ and $g(1) = 1$.	Let $B = \{f,g\} \ci C(\{0,1\}) - J$. Clearly, $M_{B} = g$, $m_{B} = f$ and $r_{B} = \frac{1}{2}$. If $h \in C(\{0,1\})$ such that $M_{B}-r_{B} \leq h \leq m_{B}+r_{B}$, then $h(0) = \frac{5}{2}$ and $h(1) = \frac{1}{2}$ and hence, $h \not\in J$.
			\end{ex}
			
			Further, we also remark that if the set $D$ is not clopen as given in Theorem~\ref{T2.3.9}, then the set $\emph{cent}_{J_{D}}(B)$ need not have the description as given in $(\ref{eqn6.0})$ for each $B \in \mc{CB}(J_{D})$. The following example supports this fact.
			
			\begin{ex}\label{Ex2.3.12}
				Let $0< a < b <1$. Consider the space $C([0,1])$ and $D = [a,b] \ci [0,1]$. Let \[B = \left\{f \in C([0,1]): f\left([a,b]\right)=0\mbox{ and }0\leq f(t) \leq 1\mbox{, for all }t\right\}\] Then, cleary, $B \in \mc{CB}(J_{D}) - \mc{K}(J_{D})$ and for each $t \in [0,1] - [a,b]$, $M_{B}(t) = 1$; $M_{B}([a,b]) =0$ and for each $t \in [0,1]$, $m_{B}(t)=0$. Therefore, for each $t \in [0,1]-(a,b)$, $N_{B}(t) = 1$ and $n_{B}(t) = 0$ and for each $t \in (a,b)$, $N_{B}(t)=0=n_{B}(t)$. It follows that $r_{B} = \frac{1}{2}$. If $h \in C([0,1])$ such that $N_{B}- r_{B} \leq h \leq n_{B} + r_B$, then $h(t) = \frac{1}{2}$, for each $t \in [0,1]-(a,b)$ and therefore, $h \not \in J_{D}$. 
			\end{ex}
			
			\subsection{Restricted Chebyshev centers in an $L_1$-predual space}\label{sec2.3}
			We begin this section by proving the existence and providing a description, which is similar to that in $(\ref{eqn6.0})$, of a Chebyshev center of a compact subset of an $L_1$-predual space, with the help of the isometric identification in Lemma~\ref{L2.2.6}.
			\begin{thm}\label{T2.3.1}
				Let $X$ be an $L_1$-predual space and $F \in \mc{K}(A_{\sigma}(B_{X^\ast}))$. Then $\textrm{cent}_{A_{\sigma}(B_{X^\ast})}(F) \neq \es$ and 
				\begin{equation}\label{EqT2.3.1}
					\textrm{cent}_{A_{\sigma}(B_{X^\ast})}(F)=\{a \in A_{\sigma}(B_{X^\ast}):M_{F}-r_{F} \leq a \leq m_{F}+r_{F}\}.
				\end{equation}
				Furthermore, $\textrm{rad}_{A_{\sigma}(B_{X^\ast})}(F)= r_{F}$.
			\end{thm}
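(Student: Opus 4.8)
The plan is to realize $F$ as a compact subset of the subspace $A_{\sigma}(B_{X^\ast})$ of $C(B_{X^\ast})$ (with $K = B_{X^\ast}$ carrying the weak$^\ast$ topology) and to produce a single affine function sandwiched between $M_{F}-r_{F}$ and $m_{F}+r_{F}$; the radius computation and the description will then follow from the results of Section~\ref{sec2.2}. First I would record two facts about the extremal functions. Since $F$ is compact, Remark~\ref{R2.2.3}~\emph{(ii)} gives that $M_{F}$ and $m_{F}$ are continuous on $B_{X^\ast}$ with $N_{F}=M_{F}$ and $n_{F}=m_{F}$. Moreover, because every element of $A_{\sigma}(B_{X^\ast})$ is odd with respect to $\sigma$, that is $\overline{x}(-x^\ast)=-\overline{x}(x^\ast)$, a direct computation from $(\ref{Eq2.2.8})$ yields the symmetry relation $m_{F}(-x^\ast)=-M_{F}(x^\ast)$ for all $x^\ast \in B_{X^\ast}$.

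The key step is an application of Lau's separation theorem (Theorem~\ref{T2.2.8}) to the function $f := m_{F}+r_{F}$. As an infimum of the weak$^\ast$-continuous affine functions $x^\ast \mapsto x^\ast(x)$, the function $m_{F}$ is concave, and it is weak$^\ast$-continuous by Remark~\ref{R2.2.3}~\emph{(ii)}; hence $f$ is weak$^\ast$-lower semicontinuous and concave. Using the symmetry relation together with the formula $r_{F}=\tfrac{1}{2}\max\{M_{F}(x^\ast)-m_{F}(x^\ast):x^\ast \in B_{X^\ast}\}$ from Remark~\ref{R2.2.7}, I would verify
\[ f(x^\ast)+f(-x^\ast) = 2r_{F}-\bigl(M_{F}(x^\ast)-m_{F}(x^\ast)\bigr) \geq 0 \qquad (x^\ast \in B_{X^\ast}), \]
which is exactly the hypothesis of Theorem~\ref{T2.2.8}. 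Consequently there exists $a \in A_{\sigma}(B_{X^\ast})$ with $a \leq m_{F}+r_{F}$ on $B_{X^\ast}$.

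It then remains to extract the lower bound $a \geq M_{F}-r_{F}$, and this is where the one-sided conclusion of Lau's theorem upgrades to a two-sided squeeze for free: replacing $x^\ast$ by $-x^\ast$ in $a \leq m_{F}+r_{F}$ and using $a(-x^\ast)=-a(x^\ast)$ together with $m_{F}(-x^\ast)=-M_{F}(x^\ast)$ gives $-a(x^\ast) \leq -M_{F}(x^\ast)+r_{F}$, that is $a \geq M_{F}-r_{F}$. Thus $a$ lies in the set on the right-hand side of $(\ref{EqT2.3.1})$, which is therefore non-empty. A short estimate, using that the norm on $A_{\sigma}(B_{X^\ast})$ is the supremum over $B_{X^\ast}$, then shows $\|a-\overline{x}\| \leq r_{F}$ for every $x \in F$, so $\textrm{rad}_{A_{\sigma}(B_{X^\ast})}(F) \leq r_{F}$; combined with the reverse inequality from Remark~\ref{R2.2.3}~\emph{(iii)} this yields $\textrm{rad}_{A_{\sigma}(B_{X^\ast})}(F)=r_{F}$ and shows $a \in \textrm{cent}_{A_{\sigma}(B_{X^\ast})}(F)$. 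Finally, now that the center set is known to be non-empty, Proposition~\ref{P2.2.5} (with $N_{F}=M_{F}$, $n_{F}=m_{F}$ and $\textrm{rad}_{A_{\sigma}(B_{X^\ast})}(F)=r_{F}$) delivers the claimed description $(\ref{EqT2.3.1})$.

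I expect the only genuinely delicate point to be the recognition that a two-sided squeeze can be obtained from the one-sided separation provided by Lau's theorem. This succeeds precisely because both the selected function $a$ and the pair $(M_{F},m_{F})$ are compatible with the involution $\sigma$; without exploiting this symmetry one would be forced into a separate insertion argument to secure the lower bound.
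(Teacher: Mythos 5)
Your proposal is correct and follows essentially the same route as the paper's proof: apply Lau's separation theorem (Theorem~\ref{T2.2.8}) to $f=m_{F}+r_{F}$, use the oddness of the selected $a$ together with $m_{F}(-x^{\ast})=-M_{F}(x^{\ast})$ to upgrade the one-sided bound to the two-sided squeeze $M_{F}-r_{F}\leq a\leq m_{F}+r_{F}$, and then invoke Remark~\ref{R2.2.3}~\emph{(iii)} and Proposition~\ref{P2.2.5}. No substantive differences.
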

			\begin{proof}
				Let $F \in \mc{K}(A_{\sigma}(B_{X^\ast}))$. 
				Let us define $f = m_{F} +r_{F}$ on $B_{X^\ast}$. It follows from Remark~\ref{R2.2.7} that $f$ is a weak$^\ast$-continuous concave function on $B_{X^\ast}$. Let $x^{\ast} \in B_{X^\ast}$. Then, 
				\begin{equation}
					\begin{split}
						&\frac{1}{2} (f(x^\ast) + f(-x^\ast))\\&=\frac{1}{2} (m_{F}(x^\ast) +r_{F} + m_{F}(-x^\ast) + r_{F} ) \\&=\frac{1}{2} [(m_{F}(x^\ast) +r_{F}) - (M_{F}(x^\ast) - r_{F})] \geq 0.
					\end{split}
				\end{equation}
				Therefore, by Theorem~\ref{T2.2.8}, there exists $a \in A_{\sigma}(B_{X^\ast})$ such that $m_{F} + r_{F} \geq a$. Now, for each $x^\ast \in B_{X^\ast}$, $m_{F}(-x^{\ast}) + r_{F} \geq a(-x^\ast)$ and hence, $M_{F}(x^\ast) - r_{F} \leq a(x^\ast)$. Therefore, $M_{F}-r_F \leq a\leq m_F + r_F$ on $B_{X^\ast}$. It follows that $\emph{rad}_{A_{\sigma}(B_{X^\ast})}(F)= r_{F}$. Therefore, from Proposition~\ref{P2.2.5}, we can deduce that $\emph{cent}_{A_{\sigma}(B_{X^\ast})}(F) \neq \es$ and \[\emph{cent}_{A_{\sigma}(B_{X^\ast})}(F)=\{a \in A_{\sigma}(B_{X^\ast}):M_{F}-r_{F} \leq a \leq m_{F}+r_{F}\}.\]
			\end{proof}	
			
			\begin{rem}
				Let $X$ be an $L_1$-predual space and $F \in \mc{K}(X)$. Using Lemma~\ref{L2.2.6}, we can view $F$ as a compact subset of $A_{\sigma}(B_{X^\ast})$. Therefore, applying Theorem~\ref{T2.3.1}, it is easy to see that $\textrm{cent}_{X}(F) \neq \es$ and $\textrm{rad}_{X}(F) = r_{F}$. Moreover, the image of the set $\textrm{cent}_{X}(F)$ under the mapping given in Lemma~\ref{L2.2.6} is precisely the set $\textrm{cent}_{A_{\sigma}(B_{X^\ast})}(F)$. 
			\end{rem}
			
			Before proceeding to the next result, let us recall the definition of a Choquet simplex. A compact convex subset $K$ of a \textit{lctvs} is called a Choquet simplex if $A(K)^{\ast}$ is a lattice. We  refer \cite{asimow} for a detailed study on Choquet simplex. 
			
			\begin{cor}\label{C2.3.2}
				Let $K$ be a Choquet simplex. Then for each $F \in \mc{K}(A(K))$, $\textrm{cent}_{A(K)}(F) \neq \es$ and \[\textrm{cent}_{A(K)}(F) = \{a \in A(K): M_{F}-r_{F} \leq a \leq m_{F}+r_{F}\}.\]
			\end{cor}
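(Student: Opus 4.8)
The plan is to read off the statement from Theorem~\ref{T2.3.1}, once $A(K)$ is recognized as an $L_1$-predual space. The decisive structural input is the classical fact of Choquet theory that, for a compact convex $K$, the condition defining a Choquet simplex---namely that $A(K)^\ast$ is a lattice---forces $A(K)^\ast$ to be (isometric to) an $L_1(\mu)$ space; equivalently, $K$ is a Choquet simplex if and only if $A(K)$ is an $L_1$-predual space (see \cite[Chapter~7]{Lacey} and \cite{asimow}). So I would first record that, $K$ being a simplex, $X := A(K)$ is an $L_1$-predual space, and then apply the machinery of the preceding subsection to $X$.

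The remaining work is to reconcile the intrinsic data produced by Theorem~\ref{T2.3.1} with the functions $M_F, m_F, r_F$ built from the ambient space $C(K)$, using that $A(K)$ is a subspace of $C(K)$. Viewing $A(K) \ci C(K)$ and $F \in \mc{K}(A(K))$, Remark~\ref{R2.2.3}~\textit{(ii)} gives that $M_F$ and $m_F$ are continuous on $K$ with $N_F = M_F$ and $n_F = m_F$, so $r_F = \frac{1}{2}\sup_{t\in K}\big(M_F(t)-m_F(t)\big)$. Exactly as in the computation closing the proof of Lemma~\ref{L2.2.4} and in Remark~\ref{R2.2.7}, interchanging the two suprema shows $\sup_{t\in K}\big(M_F(t)-m_F(t)\big) = \textrm{diam}(F)$, hence $r_F = \frac{1}{2}\,\textrm{diam}(F)$. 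On the other hand, applying the remark following Theorem~\ref{T2.3.1} to the $L_1$-predual space $X = A(K)$ (after identifying $F$ with a compact subset of $A_\sigma(B_{X^\ast})$ through Lemma~\ref{L2.2.6}) yields $\textrm{cent}_{A(K)}(F) \neq \es$ and $\textrm{rad}_{A(K)}(F) = \frac{1}{2}\,\textrm{diam}(F) = r_F$.

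With non-emptiness of $\textrm{cent}_{A(K)}(F)$ and the identity $\textrm{rad}_{A(K)}(F) = r_F$ in hand, the description is immediate from Proposition~\ref{P2.2.5} applied to the subspace $\mathcal{A} = A(K)$ of $C(K)$: since $N_F = M_F$ and $n_F = m_F$, that proposition reads $\textrm{cent}_{A(K)}(F) = \{a \in A(K) : M_F - r_F \leq a \leq m_F + r_F\}$, which is the assertion.

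The step I expect to be the real crux is the first one, namely invoking correctly that a Choquet simplex forces $A(K)$ to be an $L_1$-predual space, since that is where all the convexity-theoretic content resides; everything afterward is formal bookkeeping. A secondary, purely technical point to watch is that the $r_F$ appearing in Theorem~\ref{T2.3.1} is computed over $B_{X^\ast}$, whereas the $r_F$ in the statement is computed over $K$; both equal $\frac{1}{2}\,\textrm{diam}(F)$, so no genuine gap arises, but the two descriptions must be matched through the diameter rather than term by term. (If one preferred to avoid the passage through $A_\sigma(B_{X^\ast})$, the same conclusion could be reached by directly producing an $a \in A(K)$ with $M_F - r_F \leq a \leq m_F + r_F$ via a simplex separation argument, which re-establishes $\textrm{rad}_{A(K)}(F) \leq r_F$ and the non-emptiness; but given Theorem~\ref{T2.3.1} this is unnecessary.)
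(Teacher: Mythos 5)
Your proof is correct and follows essentially the same route as the paper: the paper likewise observes that for a Choquet simplex $K$ the space $A(K)$ is an $L_1$-predual (citing \cite[Theorem~2, p.~185]{Lacey}) and then reads the statement off Theorem~\ref{T2.3.1}. The only difference is cosmetic, in how the $B_{X^\ast}$-based and $K$-based data are matched: the paper invokes the identification of $B_{A(K)^\ast}$ with $\mathrm{conv}(K\cup -K)$ from \cite[Theorem~4.7, p.~14]{asimow}, whereas you pass through the diameter identity $r_F=\frac{1}{2}\,\mathrm{diam}(F)$ together with Proposition~\ref{P2.2.5}, and both reconciliations are sound (your parenthetical alternative via a simplex separation argument is in fact recorded by the paper in the remark following the corollary, using Edwards' separation theorem).
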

			\begin{proof}
				From \cite[Theorem~2, p.~185]{Lacey}, $A(K)$ is an $L_1$-predual space. Now, the result follows from Theorem~\ref{T2.3.1} and \cite[Theorem~4.7, p.~14]{asimow}.
			\end{proof}

			\begin{rem}
				The result in Corollary~\ref{C2.3.2} can also be proved using the Edwards' separation theorem (see \cite[Theorem~7.6, p.~72]{asimow}). Indeed, for a Choquet simplex $K$, if $F \in \mc{K}(A(K))$, then $M_F-r_F$ and $-(m_F + r_F)$ are continuous convex functions on $K$. Hence by Edwards' separation theorem, there exists $a \in A(K)$ such that $M_F-r_F \leq a \leq m_F + r_F$. It follows that $\textrm{rad}_{A(K)}(F) = r_F$. The desired conclusion now follows from Proposition~\ref{P2.2.5}.
			\end{rem}
			
			\begin{thm}\label{T2.3.3}
				Let $X$ be an $L_1$-predual space. Then the map $\textrm{cent}_{X}(.)$ is $2$-Lipschitz continuous on $\mc{K}(X)$ in the Hausdorff metric and in general, the constant $2$ is the best possible Lipschitz constant.
			\end{thm}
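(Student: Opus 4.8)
The plan is to transfer the whole problem to $A_\sigma(B_{X^\ast})$ via the isometry of Lemma~\ref{L2.2.6}, under which $\textrm{cent}_X(F)$ corresponds to $\textrm{cent}_{A_\sigma(B_{X^\ast})}(F)$ and, by Theorem~\ref{T2.3.1}, equals the order-interval $\{a\in A_\sigma(B_{X^\ast}): M_F-r_F\le a\le m_F+r_F\}$ with $\textrm{rad}_X(F)=r_F$. Since the Hausdorff metric is computed from the norm, it suffices to show that, writing $\de=d_H(F_1,F_2)$ for $F_1,F_2\in\mc{K}(X)$, every center of $F_1$ lies within $2\de$ of some center of $F_2$ (and symmetrically). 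The first routine step is to record the uniform estimates $\|M_{F_1}-M_{F_2}\|_\infty\le\de$ and $\|m_{F_1}-m_{F_2}\|_\infty\le\de$ on $B_{X^\ast}$, which follow from the definition of $d_H$ exactly as in Lemma~\ref{L2.2.9}, using that the extrema defining $M_{F_i},m_{F_i}$ are attained (Remark~\ref{R2.2.7}); these in turn give $|r_{F_1}-r_{F_2}|\le\de$ from $r_{F_i}=\tfrac12\max_{x^\ast}(M_{F_i}-m_{F_i})$.

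Next, fix $a_1\in\textrm{cent}_{A_\sigma(B_{X^\ast})}(F_1)$, so $M_{F_1}-r_{F_1}\le a_1\le m_{F_1}+r_{F_1}$. Following the pattern of the proof of Theorem~\ref{T2.3.1}, I would set $f=\min\{a_1+2\de,\ m_{F_2}+r_{F_2}\}$ and $g=\max\{a_1-2\de,\ M_{F_2}-r_{F_2}\}$ on $B_{X^\ast}$. The two key computations are: (a) $g\le f$ pointwise — the only nontrivial inequalities $a_1-2\de\le m_{F_2}+r_{F_2}$ and $M_{F_2}-r_{F_2}\le a_1+2\de$ follow from $a_1\le m_{F_1}+r_{F_1}$, $a_1\ge M_{F_1}-r_{F_1}$ together with the $\de$-estimates above, while $M_{F_2}-r_{F_2}\le m_{F_2}+r_{F_2}$ is just the definition of $r_{F_2}$; and (b) the symmetry identity $-f(-x^\ast)=g(x^\ast)$, which uses $a_1\in A_\sigma(B_{X^\ast})$, the relation $m_{F_2}(-x^\ast)=-M_{F_2}(x^\ast)$, and that $\min$ becomes $\max$ under negation. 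Since $f$ is a minimum of an affine and a weak$^\ast$-continuous concave function it is weak$^\ast$-lower semicontinuous and concave, and $\tfrac12(f(x^\ast)+f(-x^\ast))=\tfrac12(f-g)(x^\ast)\ge0$ by (a)--(b); hence Theorem~\ref{T2.2.8} yields $a_2\in A_\sigma(B_{X^\ast})$ with $a_2\le f$. The decisive point is that symmetry then forces the reverse bound for free: $a_2(x^\ast)=-a_2(-x^\ast)\ge -f(-x^\ast)=g(x^\ast)$. Thus $M_{F_2}-r_{F_2}\le g\le a_2\le f\le m_{F_2}+r_{F_2}$, so $a_2\in\textrm{cent}(F_2)$ by Theorem~\ref{T2.3.1}, and $a_1-2\de\le g\le a_2\le f\le a_1+2\de$, so $\|a_1-a_2\|_\infty\le2\de$. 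Swapping $F_1$ and $F_2$ gives the reverse inclusion, whence $d_H(\textrm{cent}_X(F_1),\textrm{cent}_X(F_2))\le 2\,d_H(F_1,F_2)$.

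For optimality I would exhibit a single pair realizing the ratio $2$. Take $X=C(\{1,2\})=\ell_\infty^2$, an $L_1$-predual, and the finite sets $F_1=\{(0,0)\}$, $F_2=\{(1,1),(1,-1)\}$. Here $M_{F_2}=(1,1)$, $m_{F_2}=(1,-1)$ and $r_{F_2}=\tfrac12\,\textrm{diam}(F_2)=1$, so by Proposition~\ref{P2.2.5} (or Theorem~\ref{T2.3.1}) $\textrm{cent}_X(F_2)=\{(t,0):0\le t\le2\}$ while $\textrm{cent}_X(F_1)=\{(0,0)\}$. A direct computation gives $d_H(F_1,F_2)=1$ but $d_H(\textrm{cent}_X(F_1),\textrm{cent}_X(F_2))=\|(2,0)\|_\infty=2$, so no Lipschitz constant smaller than $2$ can work.

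I expect the main obstacle to be organizing the one-sided separation so that the symmetry of $A_\sigma(B_{X^\ast})$ upgrades $a_2\le f$ to the full sandwich $g\le a_2\le f$, simultaneously placing $a_2$ in $\textrm{cent}(F_2)$ and keeping it within $2\de$ of $a_1$; this hinges entirely on choosing the envelopes $f,g$ so that both the compatibility $g\le f$ and the symmetry identity $-f(-\cdot)=g$ hold, and on checking that the $\de$-perturbation estimates keep the constant at exactly $2$. Alternatively, the same $a_2$ can be produced in one step by applying the selection Theorem~\ref{T2.2.80} to the interval-valued map $x^\ast\mapsto[g(x^\ast),f(x^\ast)]$, which is convex-valued, symmetric, and weak$^\ast$-lower semicontinuous.
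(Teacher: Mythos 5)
Your proposal is correct and follows essentially the same route as the paper: the same reduction to $A_\sigma(B_{X^\ast})$, the same envelopes $f=\min\{a+2\de,\,m_{F_2}+r_{F_2}\}$ and $g=\max\{a-2\de,\,M_{F_2}-r_{F_2}\}$, the same application of Theorem~\ref{T2.2.8} with the symmetry upgrade from $a_2\le f$ to $g\le a_2\le f$, and an equivalent two-point example in $\ell_\infty^2$ for sharpness. The only cosmetic differences are that you estimate $\|M_{F_1}-M_{F_2}\|_\infty\le\de$ directly rather than decomposing points of $F_2$ as $z_1+\de z_0$, and you note the alternative of invoking the selection theorem (Theorem~\ref{T2.2.80}) in place of the separation theorem.
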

			\begin{proof}
				From Lemma~\ref{L2.2.6}, it suffices to prove that the map $\emph{cent}_{A_{\sigma}(B_{X^\ast})}(.)$ is $2$-Lipschitz continuous on $\mc{K}(A_{\sigma}(B_{X^\ast}))$ in the Hausdorff metric. 
				
				Let $\varepsilon>0$ and $F_1,F_2 \in \mc{K}(A_{\sigma}(B_{X^\ast}))$ be such that $d_{H}(F_1 , F_2)<\frac{\varepsilon}{2}.$ This implies there exists $0<\delta<\frac{\varepsilon}{2}$ such that 
				\begin{equation}\label{Eq2.3.0}
					F_1 \ci F_2 + \delta B(0,1)\mbox{ and }F_2 \ci  F_1 + \delta B(0,1).
				\end{equation}
				For each $i=1,2$, let $r_{i} = \emph{rad}_{A_{\sigma}(B_{X^\ast})}(F_{i})$. Then by Theorem~\ref{T2.3.1}, for each $i=1,2$, 
				\begin{equation}\label{Eq2.3.00}
					\emph{cent}_{A_{\sigma}(B_{X^\ast})}(F_{i}) = \{a \in A_{\sigma}(B_{X^\ast}):M_{F_{i}}-r_i \leq a \leq m_{F_{i}} +r_i\}.
				\end{equation}
				and by Lemma~\ref{L2.2.9}, $r_1-\delta < r_2 < r_1 + \delta.$
				
				Let $a \in \emph{cent}_{A_{\sigma}(B_{X^\ast})}(F_1)$ and $z_{2} \in F_{2}$. Then it follows from $(\ref{Eq2.3.0})$ that there exists $z_{1} \in F_{1}$ and $z_0 \in B(0,1)$ such that $z_{2} = z_{1} + \delta z_0$. Therefore, using $($\ref{Eq2.3.00}$)$, it follows that for each $x^\ast \in B_{X^\ast}$,
				\begin{equation}\label{ET4.1.1}
					\begin{split}
						z_{2}(x^\ast) -r_2 &= z_{1}(x^\ast) + \delta z_{0}(x^\ast) -r_2\\ &< z_{1}(x^\ast) + \delta -r_1 +\delta\\ &= z_{1}(x^\ast)-r_1 +2 \delta\\ &\leq a(x^\ast) + 2 \delta.		     		
					\end{split}
				\end{equation}
				\begin{equation}\label{ET4.1.2}
					\begin{split}
						z_{2}(x^\ast) +r_2 &= z_{1}(x^\ast) + \delta z_{0}(x^\ast) +r_2\\ &> z_{1}(x^\ast) - \delta +r_1 -\delta\\ &=z_{1}(x^\ast) + r_1 - 2 \delta\\ &\geq a(x^\ast) - 2\delta.		     		
					\end{split}
				\end{equation}
				
				It follows from $(\ref{ET4.1.1})$, $(\ref{ET4.1.2})$ and the definitions of $M_{F_{2}}$ and $m_{F_{2}}$ that $M_{F_{2}} -r_{2} \leq a+2\delta $ and $a-2\delta \leq m_{F_{2}} + r_{2}$ on $B_{X^\ast}$. Define $g= \max \{M_{F_{2}} -r_{2},a-2\delta\}$ and $f =\min\{a+2 \delta, m_{F_{2}} + r_{2}\}$. Clearly, $g \leq f$ and $-g,f$ are weak$^\ast$-continuous concave functions on $B_{X^\ast}$. For each $x^\ast \in B_{X^\ast}$, by using the fact that  $M_{F_2}(-x^\ast) = -m_{F_2}(x^\ast)$, it is easy to verify that  \[\frac{1}{2}(f(x^\ast)+f(-x^\ast))\geq 0.\] 
				Therefore, by Theorem~\ref{T2.2.8}, there exists $a^\prime \in A_{\sigma}(B_{X^\ast})$ such that $f \geq a^\prime$. Now, for each $x^\ast \in B_{X^\ast}$, since $f(x^\ast) = -g(-x^\ast)$, we have $g(-x^\ast) \leq -a^{\prime}(x^\ast) = a^{\prime}(-x^\ast)$. It follows that $g \leq a^\prime \leq f$ on $B_{X^\ast}$. Hence, $a^\prime \in \emph{cent}_{A_{\sigma}(B_{X^\ast})}(F_2)$ such that $\|a^\prime - a\|\leq 2 \delta < \varepsilon$. Therefore, this proves that $\emph{cent}_{A_{\sigma}(B_{X^\ast})}(F_1) \ci cent_{A_{\sigma}(B_{X^\ast})}(F_2) +\varepsilon B(0,1)$. Similarly, it can be proved that $\emph{cent}_{A_{\sigma}(B_{X^\ast})}(F_2) \ci \emph{cent}_{A_{\sigma}(B_{X^\ast})}(F_1) +\varepsilon B(0,1)$.
				Therefore, $d_{H}(\emph{cent}_{A_{\sigma}(B_{X^\ast})}(F_1),\emph{cent}_{A_{\sigma}(B_{X^\ast})}(F_2)) \leq\varepsilon.$ Since $\varepsilon>0$ is arbitrary, $d_{H}(\emph{cent}_{A_{\sigma}(B_{X^\ast})}(F_1),\emph{cent}_{A_{\sigma}(B_{X^\ast})}(F_2)) \leq 2 d_{H}(F_1, F_2)$.
				
				In order to show that the constant $2$ is the best possible choice, consider $\mb{R}^2$ equipped with the supremum norm. Let $F = \{(-1,0), (1,0)\}$ and $G = \{(0,1)\}$. Clearly, $d_{H}(F,G) = 1$. Further, $\emph{cent}_{\mb{R}^2}(F) = \{(0,\lambda) : -1 \leq \lambda \leq 1\}$ and $\emph{cent}_{\mb{R}^{2}}(G) = \{(0,1)\}$. Then clearly, $d_{H}(\emph{cent}_{\mb{R}^2}(F),\emph{cent}_{\mb{R}^2}(G)) = 2$.
			\end{proof} 
			
			For a Banach space $X$, let $C(K,X)$ denote the Banach space of $X$-valued continuous functions on a compact Hausdorff space $K$. If $X$ is an $L_1$-predual space, then by \cite[Proposition~4.9]{BD}, $C(K,X)$ is also an $L_1$-predual space, for each compact Hausdorff space $K$. Therefore, the following result follows immediately from Theorem~\ref{T2.3.1} and Theorem~\ref{T2.3.3}.
			\begin{cor}
				Let $X$ be an $L_1$-predual space and $K$ be a compact Hausdorff space. Then $C(K,X)$ admits centers for $\mc{K}(C(K,X))$ and the map $\textrm{cent}_{C(K,X)}(.)$ is $2$-Lipschitz continuous on $\mc{K}(C(K,X))$ in the Hausdorff metric.
			\end{cor}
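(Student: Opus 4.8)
The plan is to reduce the statement entirely to the structural fact that $C(K,X)$ is itself an $L_1$-predual space, after which both conclusions become direct instances of results already established for arbitrary $L_1$-predual spaces. The only non-formal input is the cited result \cite[Proposition~4.9]{BD}: since $X$ is an $L_1$-predual space and $K$ is compact Hausdorff, the space $C(K,X)$ of $X$-valued continuous functions on $K$ is again an $L_1$-predual space. This is precisely the hypothesis under which Theorem~\ref{T2.3.1} and Theorem~\ref{T2.3.3} are stated, so the groundwork is already laid.

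With this identification in hand, the existence of Chebyshev centers follows from Theorem~\ref{T2.3.1} together with the remark immediately following it: for any $L_1$-predual space and any compact subset $F$, the set of Chebyshev centers is non-empty and $\textrm{rad}(F)=r_F$. Applying this with the ambient space taken to be $C(K,X)$ shows at once that $C(K,X)$ admits centers for $\mc{K}(C(K,X))$. For the continuity assertion, I would apply Theorem~\ref{T2.3.3} verbatim with the ambient space equal to $C(K,X)$; that theorem states exactly that for an $L_1$-predual space the map $\textrm{cent}_{(\cdot)}(\cdot)$ is $2$-Lipschitz continuous on the compact subsets in the Hausdorff metric.

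Since every step consists of invoking an already-proved statement under a hypothesis that is verified by \cite[Proposition~4.9]{BD}, I do not anticipate a genuine obstacle. The only point deserving a moment of care is confirming that the $L_1$-predual hypothesis of Theorems~\ref{T2.3.1} and~\ref{T2.3.3} is truly met by $C(K,X)$, rather than by $X$ alone; once the passage from $X$ to $C(K,X)$ is justified by the cited proposition, the corollary is immediate.
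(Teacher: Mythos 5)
Your proposal is correct and follows exactly the paper's own argument: the paper likewise invokes \cite[Proposition~4.9]{BD} to conclude that $C(K,X)$ is an $L_1$-predual space and then applies Theorem~\ref{T2.3.1} and Theorem~\ref{T2.3.3} directly. Nothing further is needed.
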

			
			We now prove our next main result.
			\begin{thm}\label{T2.3.4}
				Let $X$ be an $L_1$-predual space. Let $F \in \mc{K}(X)$ and $V \in \mathcal{CV} (X)$. Then,
				the equality, $\textrm{rad}_{V}(F) = \textrm{rad}_{X}(F) + d(V,\textrm{cent}_{X}(F))$, holds true. Furthemore, a necessary and sufficient condition for the set $\textrm{cent}_{V}(F) \neq \es$ is that there exists $v_{0} \in V$ and $x_{0} \in \textrm{cent}_{X}(F)$ such that $d(V,\textrm{cent}_{X}(F)) = \|v_{0} - x_{0}\|$.
			\end{thm}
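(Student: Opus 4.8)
The plan is to work throughout inside $A_\sigma(B_{X^\ast})$ via the identification of Lemma~\ref{L2.2.6}, so that $F \in \mc{K}(A_\sigma(B_{X^\ast}))$, and to exploit the explicit description from Theorem~\ref{T2.3.1}, namely $\textrm{cent}_X(F) = \{a \in A_\sigma(B_{X^\ast}) : M_F - r_F \leq a \leq m_F + r_F\}$ together with $\textrm{rad}_X(F) = r_F$. Write $R = \textrm{rad}_X(F) = r_F$ and $\rho = \textrm{rad}_V(F)$, and note $\rho \geq R$ since $V \ci X$. The inequality $\rho \leq R + d(V,\textrm{cent}_X(F))$ is the easy half: for any $v \in V$ and $y \in \textrm{cent}_X(F)$ the triangle inequality gives $r(v,F) \leq \|v-y\| + r(y,F) = \|v-y\| + R$, and taking the infimum first over $y$ and then over $v$ yields the claim.

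For the reverse inequality the key observation is that for $v \in A_\sigma(B_{X^\ast})$,
\[ r(v,F) = \sup_{x^\ast \in B_{X^\ast}} \max\{M_F(x^\ast) - v(x^\ast),\, v(x^\ast) - m_F(x^\ast)\}, \]
since $\|v-b\| = \sup_{x^\ast} |v(x^\ast) - b(x^\ast)|$ and, by Remark~\ref{R2.2.7}, $b(x^\ast)$ ranges over a set with maximum $M_F(x^\ast)$ and minimum $m_F(x^\ast)$. Fix $\varepsilon > 0$ and choose $v \in V$ with $r(v,F) < \rho + \varepsilon$; then $M_F - v \leq \rho + \varepsilon$ and $v - m_F \leq \rho + \varepsilon$ pointwise. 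Put $c = \rho + \varepsilon - R \geq 0$ and define $g = \max\{M_F - r_F,\, v-c\}$ and $f = \min\{m_F + r_F,\, v+c\}$. A check of the four comparisons shows $g \leq f$ on $B_{X^\ast}$: the comparison $M_F - r_F \leq m_F + r_F$ is the definition of $r_F$, while $M_F - r_F \leq v+c$ and $v-c \leq m_F + r_F$ reduce to $M_F - v \leq \rho + \varepsilon$ and $v - m_F \leq \rho + \varepsilon$. Using that $M_F,m_F$ are weak$^\ast$-continuous and $v$ is affine, $g$ is convex and $f$ is concave. Since $v \in A_\sigma(B_{X^\ast})$ and $M_F(-x^\ast) = -m_F(x^\ast)$, one verifies the symmetry $g(-x^\ast) = -f(x^\ast)$, so that $\frac{1}{2}(f(x^\ast)+f(-x^\ast)) = \frac{1}{2}(f(x^\ast) - g(x^\ast)) \geq 0$. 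Lau's separation theorem (Theorem~\ref{T2.2.8}) then produces $a \in A_\sigma(B_{X^\ast})$ with $a \leq f$; evaluating at $-x^\ast$ and using the symmetry gives $a \geq g$ as well. Hence $M_F - r_F \leq a \leq m_F + r_F$, so $a \in \textrm{cent}_X(F)$, and $\|v - a\| \leq c = \rho + \varepsilon - R$, whence $d(V,\textrm{cent}_X(F)) \leq \rho + \varepsilon - R$. Letting $\varepsilon \ra 0^{+}$ gives $\rho \geq R + d(V,\textrm{cent}_X(F))$, completing the equality. Alternatively, the interval-valued map $x^\ast \mapsto [g(x^\ast),f(x^\ast)]$ is convex, symmetric and weak$^\ast$-lower semicontinuous, so the selection theorem (Theorem~\ref{T2.2.80}) produces $a$ directly.

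For the characterization of when $\textrm{cent}_V(F) \neq \es$, sufficiency is immediate from the equality just proved: if $d(V,\textrm{cent}_X(F)) = \|v_0 - x_0\|$ with $v_0 \in V$ and $x_0 \in \textrm{cent}_X(F)$, then $r(v_0,F) \leq \|v_0 - x_0\| + R = d(V,\textrm{cent}_X(F)) + R = \rho$, forcing $r(v_0,F) = \rho$ and $v_0 \in \textrm{cent}_V(F)$. For necessity, take $v_0 \in \textrm{cent}_V(F)$, so that $r(v_0,F) = \rho$, and rerun the sandwich construction above with $v = v_0$ and the exact value $c = \rho - R = d(V,\textrm{cent}_X(F))$; the four comparisons hold because now $M_F - v_0 \leq \rho$ and $v_0 - m_F \leq \rho$ exactly, producing $x_0 \in \textrm{cent}_X(F)$ with $\|v_0 - x_0\| \leq d(V,\textrm{cent}_X(F))$. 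Since $\|v_0 - x_0\| \geq d(v_0,\textrm{cent}_X(F)) \geq d(V,\textrm{cent}_X(F))$, equality holds, so the infimum defining $d(V,\textrm{cent}_X(F))$ is attained at $(v_0,x_0)$.

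The one genuinely delicate step is the sandwiching: I expect the main obstacle to be verifying cleanly that $g \leq f$ together with the symmetry identity $g(-x^\ast) = -f(x^\ast)$, which is exactly what lets a one-sided separation (or the symmetric selection theorem) deliver a simultaneous two-sided bound inside $A_\sigma(B_{X^\ast})$; everything else is triangle-inequality bookkeeping. The reduction hinges on the fact that for compact $F$ the envelopes $M_F,m_F$ are weak$^\ast$-continuous, so that $g,f$ are genuinely convex and concave, respectively, allowing Theorem~\ref{T2.2.8} or Theorem~\ref{T2.2.80} to apply.
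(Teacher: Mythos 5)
Your argument is correct and is essentially the paper's own: the paper also passes to $A_\sigma(B_{X^\ast})$, extracts an approximate minimizer $v_n\in V$ with $r(v_n,F)<\textrm{rad}_V(F)+\tfrac1n$, and applies the Lazar--Lindenstrauss selection theorem (Theorem~\ref{T2.2.80}) to the interval-valued map $\Phi_n(x^\ast)=[v_n(x^\ast)-S_n,\,v_n(x^\ast)+S_n]\cap[M_F(x^\ast)-r_F,\,m_F(x^\ast)+r_F]$, which is precisely your interval $[g(x^\ast),f(x^\ast)]$, and it proves the attainment criterion exactly as you do. Your primary route through Lau's separation theorem (Theorem~\ref{T2.2.8}) applied to $f=\min\{m_F+r_F,\,v+c\}$ is the same device the paper itself uses in the proof of Theorem~\ref{T2.3.3}, and for compact $F$ it is marginally more self-contained, since $f$ is manifestly weak$^\ast$-continuous and concave and no lower-semicontinuity verification for a set-valued map is required.
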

			\begin{proof}
				By Lemma~\ref{L2.2.6}, we can view $F$ as a compact subset of $A_{\sigma}(B_{X^\ast})$ and $V$ as a closed convex subset of $A_{\sigma}(B_{X^\ast})$. Hence, it suffices to prove the following claims.
				
				{\sc Claim 1:} The following formula is satisfied.  
				\begin{equation}\label{eqn5.1}
					\emph{rad}_{V}(F) = \emph{rad}_{A_{\sigma}(B_{X^\ast})}(F) + d(V,\emph{cent}_{A_{\sigma}(B_{X^\ast})}(F)).
				\end{equation}
				
				{\sc Claim 2:} The set $\emph{cent}_{V}(F) \neq \es$ if and only if $d(V,\emph{cent}_{A_{\sigma}(B_{X^\ast})}(F)) = \|v_{0} - a_{0}\|,$ for some $v_{0} \in V$ and $a_{0} \in \emph{cent}_{A_{\sigma}(B_{X^\ast})}(F)$. 
				
				From Theorem~\ref{T2.3.1}, $\emph{cent}_{A_{\sigma}(B_{X^\ast})}(F) \neq \es$ and $\emph{rad}_{A_{\sigma}(B_{X^\ast})}(F) = r_{F}$. Let $R=d(V,\emph{cent}_{A_{\sigma}(B_{X^\ast})}(F))$. In order to prove the formula in $(\ref{eqn5.1})$, it suffices to show that $\emph{rad}_{V}(F) \geq  r_{F} + R$ because the reverse inequality is a simple consequence of triangle inequality. We implement the proof techniques used in \cite[Theorem~2.2]{SW}.
				
				Let $S_{n} = \emph{rad}_{V}(F) - r_{F} + \frac{1}{n}$, for $n=1,2,\ldots$. There exists $v_{n} \in V$ such that \[r(v_n,F) < \emph{rad}_{V}(F) + \frac{1}{n}.\] Therefore, from the above inequality and $(\ref{Eq2.2.8})$, for each $x^\ast \in B_{X^\ast}$,
				\begin{equation}\label{eqn5.2}
					\begin{split}
						M_{F}(x^\ast) - \emph{rad}_{V}(F) - \frac{1}{n} \leq v_{n}(x^\ast) \leq m_{F}(x^\ast) + \emph{rad}_{V}(F) + \frac{1}{n}.\\
					\end{split}	
				\end{equation}
				This implies
				\begin{equation}\label{eqn5.3}
					\begin{split}
						M_{F}(x^\ast) - r_{F} - S_{n} \leq v_{n}(x^\ast) \leq m_{F}(x^\ast) + r_{F} + S_{n}.
					\end{split}	
				\end{equation}
				
				We define the following set-valued function. For each $x^\ast \in B_{X^\ast}$,
				\begin{equation}\label{eqn5.4}
					\Phi_{n}(x^\ast) = [v_{n}(x^\ast) -S_{n}\mbox{, }v_{n}(x^\ast) + S_{n}] \cap [M_{F}(x^\ast) - r_{F}, m_{F}(x^\ast) + r_{F}].
				\end{equation}
				Clearly, $\Phi_{n}(x^\ast)$ is closed, convex and bounded for each $x^\ast \in B_{X^\ast}$. Moreover, the inequalities in $(\ref{eqn5.3})$ guarantee that $\Phi_{n}(x^\ast)$ is non-empty for each $x^\ast \in B_{X^\ast}$. The map
				$\Phi_{n}$ can be proved to be a weak$^\ast$-lower semicontinuous function on $B_{X^\ast}$ using the same argument as in the proof of \cite[Theorem~2.2]{SW}.
				
				Now, let $0<\al<1$ and $x^{\ast}_{1},x^{\ast}_{2} \in B_{X^\ast}$. From the facts that $v_n \in A_{\sigma}(B_{X^\ast})$ and the functions $M_{F}$ and $-m_{F}$ are convex on $B_{X^\ast}$, it is easy to verify that \[\al \Phi(x^{\ast}_{1})+(1-\al)\Phi(x^{\ast}_{2}) \ci \Phi(\al x^{\ast}_{1} + (1-\al)x^{\ast}_{2}).\] Moreover, if $x^\ast \in B_{X^\ast}$, then from the facts that $v_n \in A_{\sigma}(B_{X^\ast})$ and $M_{F}(-x^\ast) = -m_{F}(x^\ast)$, it follows that  $-\Phi_{n}(x^\ast) = \Phi_{n}(-x^\ast)$. Therefore, $\Phi_{n}$ is a convex symmetric weak$^\ast$-lower semicontinuous function on $B_{X^\ast}$ and hence, by Theorem~\ref{T2.2.80}, there exists $a_{n} \in A_{\sigma}(B_{X^\ast})$ such that $a_{n}(x^\ast) \in \Phi_{n}(x^\ast)$, for each $x^\ast \in B_{X^\ast}$. Therefore, for each $x^\ast \in B_{X^\ast}$, \[v_{n}(x^\ast) -S_{n} \leq a_{n}(x^\ast) \leq v_{n}(x^\ast) + S_{n}.\] It follows that $\|v_{n} - a_{n}\| \leq S_{n}.$ Moreover, for each $x^\ast \in B_{X^\ast}$, \[M_{F}(x^\ast)-r_{F} \leq a_{n}(x^\ast) \leq m_{F}(x^\ast)+r_{F}.\] Therefore, by Theorem~\ref{T2.3.1}, $a_{n} \in \emph{cent}_{A_{\sigma}(B_{X^\ast})}(F)$. Hence, for each $n=1,2,\ldots$, $R\leq S_{n}$. This proves the formula in $(\ref{eqn5.1})$.
				
				We now prove {\sc Claim~2}. Suppose $R =  \|v_{0} - a_{0}\|,$ for some $v_{0} \in V$ and $a_{0} \in \emph{cent}_{A_{\sigma}(B_{X^\ast})}(F)$. This implies \[r(v_0,F) \leq \sup_{z\in F} \{\|v_{0}-a_{0}\|+ \|a_{0} - z\|\} \leq R + r_{F}= \emph{rad}_{V}(F).\] Therefore, $\emph{rad}_{V}(F) = r(v_0,F)$ and hence, $v_{0} \in \emph{cent}_{V}(F)$. 
				
				Conversely, if $v_{0} \in \emph{cent}_{V}(F)$, then an argument similar to the one above proves that the set-valued map defined as $\Phi(x^\ast) = [v_{0}(x^\ast) - R, v_{0}(x^\ast)+R] \cap [M_{F}(x^\ast) -r_{F}, m_{F}(x^\ast) + r_{F}]$, for each $x^\ast \in B_{X^\ast}$, is convex symmetric weak$^\ast$-lower semicontinuous function on $B_{X^\ast}$. Hence, by Theorem~\ref{T2.2.80}, there exists a selection $a_{0} \in \emph{cent}_{A_{\sigma}(B_{X^\ast})}(F)$ such that $R = \|v_{0} - a_{0}\|$.
			\end{proof}	
			
			The following result provides a geometrical characterization of an $L_1$-predual space.
			\begin{thm}\label{T2.3.5}
				Let $X$ be a Banach space. Then the following statements are equivalent.
				\begin{enumerate}[{(i)}]
					\item $X$ is an $L_{1}$-predual space.
					\item For each $V \in \mc{CV}(X)$ and $F \in \mc{F}(X)$, $\textrm{rad}_{V}(F) = \textrm{rad}_{X}(F) + d(V,\textrm{cent}_{X}(F)).$
					\item For each $V \in \mc{CV}(X)$ and $F \in \mc{F}(X)$, $\textrm{rad}_{V}(F) = \textrm{rad}_{X}(F) + \lim_{\delta \rightarrow 0^{+}} d(V,\textrm{cent}_{X}(F,\delta)).$
				\end{enumerate}
			\end{thm}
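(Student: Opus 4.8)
The plan is to establish the cycle $(i)\Rightarrow(ii)\Rightarrow(iii)\Rightarrow(i)$. The implication $(i)\Rightarrow(ii)$ is immediate: if $X$ is an $L_1$-predual, then Theorem~\ref{T2.3.4} furnishes the identity $\textrm{rad}_V(F)=\textrm{rad}_X(F)+d(V,\textrm{cent}_X(F))$ for every $F\in\mc{K}(X)$ and $V\in\mc{CV}(X)$, and since $\mc{F}(X)\subseteq\mc{K}(X)$, statement $(ii)$ follows by restriction.

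For $(ii)\Rightarrow(iii)$ I would argue by a squeeze using two facts valid in \emph{any} Banach space. First, since $\textrm{cent}_X(F)=\textrm{cent}_X(F,0)\subseteq\textrm{cent}_X(F,\delta)$ and the sets $\textrm{cent}_X(F,\delta)$ increase with $\delta$, the number $d(V,\textrm{cent}_X(F,\delta))$ is nondecreasing as $\delta\downarrow 0$ and is bounded above by $d(V,\textrm{cent}_X(F))$; hence $\lim_{\delta\to0^+}d(V,\textrm{cent}_X(F,\delta))$ exists and is $\le d(V,\textrm{cent}_X(F))$. Second, for $c\in\textrm{cent}_X(F,\delta)$ and $v\in V$ the triangle inequality yields $r(v,F)\le\|v-c\|+\textrm{rad}_X(F)+\delta$, whence $\textrm{rad}_V(F)\le\textrm{rad}_X(F)+\lim_{\delta\to0^+}d(V,\textrm{cent}_X(F,\delta))$. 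Now $(ii)$ applied with $V=X$ forces $\textrm{cent}_X(F)\neq\es$, and combining the two facts with $(ii)$ gives $\textrm{rad}_X(F)+d(V,\textrm{cent}_X(F))=\textrm{rad}_V(F)\le\textrm{rad}_X(F)+\lim_{\delta\to0^+}d(V,\textrm{cent}_X(F,\delta))\le\textrm{rad}_X(F)+d(V,\textrm{cent}_X(F))$, so equality holds throughout and $(iii)$ follows.

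The crux is $(iii)\Rightarrow(i)$. I would first set up a dictionary between restricted-center data of finite sets and intersections of balls: writing $F=\{x_1,\dots,x_n\}$ one has $r(v,F)=\max_i\|v-x_i\|$, so $\textrm{cent}_X(F,\delta)=\bigcap_i\overline{B}(x_i,\textrm{rad}_X(F)+\delta)$ (closed balls) and $\textrm{rad}_V(F)=\inf\{\rho:V\cap\bigcap_i\overline{B}(x_i,\rho)\neq\es\}$. Under this translation $(iii)$ becomes a quantitative statement about how far a closed convex set must be moved to meet a common intersection of equal-radius balls. I would then aim to deduce from it the intersection-theoretic characterization of $L_1$-preduals due to Lindenstrauss (see \cite[Chapter~7]{Lacey}): $X$ is an $L_1$-predual if and only if every finite family of pairwise intersecting closed balls has, after enlarging each radius by an arbitrary $\e>0$, a common point.

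The main obstacle is that the Chebyshev radius of a finite set only detects balls of a \emph{common} radius, whereas the intersection property concerns balls $\overline{B}(x_i,r_i)$ of possibly different radii. The device I would exploit is the freedom to let $V$ be an arbitrary closed convex set: given pairwise intersecting balls $\overline{B}(x_i,r_i)$, I would absorb part of the family into $V$ (taking $V$ to be a ball $\overline{B}(x_n,r_n)$, or more generally an intersection $\bigcap_{i\in S}\overline{B}(x_i,r_i)$, which is again closed and convex) and keep the remaining centers as the finite set $F$, so that the decomposition in $(iii)$ couples the radius needed to reach $V$ with the distance from $V$ to the approximate centers $\textrm{cent}_X(F,\delta)$. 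Matching the $\delta\to0^+$ approximate centers to the $\e$-enlargement then forces the enlarged family to share a point, and a careful bookkeeping of the several radii occurring in the estimate is the delicate part of the argument. Conceptually, this is precisely why the small class $\mc{F}(X)$ produces the full class of $L_1$-preduals, while in Theorem~\ref{T1} the larger class $\mc{CB}(X)$ forces the stronger \emph{infinite} intersection property and thereby singles out the spaces $C(K)$ and $C_0(T)$.
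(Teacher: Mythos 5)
Your implications \textit{(i)}$\Rightarrow$\textit{(ii)} and \textit{(ii)}$\Rightarrow$\textit{(iii)} are correct and essentially identical to the paper's: the first is a restriction of Theorem~\ref{T2.3.4} to $\mc{F}(X)\ci\mc{K}(X)$, and the second is the same squeeze $\textrm{rad}_V(F)\leq \textrm{rad}_X(F)+\lim_{\delta\to 0^+}d(V,\textrm{cent}_X(F,\delta))\leq \textrm{rad}_X(F)+d(V,\textrm{cent}_X(F))$ combined with \textit{(ii)}. The problem is \textit{(iii)}$\Rightarrow$\textit{(i)}, which you correctly identify as the crux but do not actually prove: the entire content of that implication is deferred to ``a careful bookkeeping of the several radii,'' and that bookkeeping is exactly where your plan runs into trouble. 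You aim at Lindenstrauss's characterization via pairwise intersecting balls $\overline{B}(x_i,r_i)$ of \emph{different} radii, but every object that hypothesis \textit{(iii)} hands you --- $\textrm{rad}_V(F)$, $\textrm{rad}_X(F)$, and $\textrm{cent}_X(F,\delta)=\bigcap_i\overline{B}(x_i,\textrm{rad}_X(F)+\delta)$ --- is an equal-radius quantity. Absorbing some balls into $V$ does not resolve this: taking $V=\overline{B}(x_n,r_n)$ and $F=\{x_1,\dots,x_{n-1}\}$, the identity in \textit{(iii)} still only couples $V$ to intersections of balls about the $x_i$ with one \emph{common} radius, and no mechanism is given for recovering the individual radii $r_1,\dots,r_{n-1}$. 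As written, the implication is a research plan with its central difficulty acknowledged but unsolved, so the proof is incomplete.

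The paper avoids this obstacle by invoking a different, equal-radius characterization of $L_1$-preduals, namely \cite[Theorem~1]{Rao2}: $X$ is an $L_1$-predual if and only if $\textrm{rad}_X(F)=\frac{1}{2}\textrm{diam}(F)$ for every $F\in\mc{F}(X)$. This is verified from \textit{(iii)} by a short induction in the spirit of \cite[Theorem~3.4]{EWW}: with $F=\{x_1,\dots,x_n\}$, $\textrm{diam}(F)=\|x_1-x_2\|$ and $R=\frac{1}{2}\textrm{diam}(F)$, apply \textit{(iii)} to $F'=\{x_1,x_2\}$ and the singleton $V=\{x_3\}$ to get $2R\geq R+\lim_{\delta\to 0^+}d(\{x_3\},\textrm{cent}_X(F',\delta))$, which produces for each $\e>0$ a point $x_\e$ with $r(x_\e,\{x_1,x_2,x_3\})\leq R+\e$, hence $\textrm{rad}_X(\{x_1,x_2,x_3\})=R$; then repeat, adjoining one point of $F$ at a time. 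If you want to salvage your approach, you should either carry out this reduction to singleton sets $V$ (note that only singletons are ever needed), or prove that the equal-radius weak intersection property you can actually extract from \textit{(iii)} already implies the unequal-radius one --- the latter is essentially what Rao's theorem encapsulates, and it is not something to wave at.
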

			\begin{proof}
				\textit{(i) $\Rightarrow$ \textit{(ii)}} follows from Theorem~\ref{T2.3.4} and  \textit{(ii) $\Rightarrow$ \textit{(iii)}} follows from the following chain of inequalities. For each $F \in \mc{F}(X)$, \[\emph{rad}_{V}(F) \leq \emph{rad}_{X}(F) + \lim_{\delta \rightarrow 0^{+}} d(V,\emph{cent}_{X}(F,\delta)) \leq \emph{rad}_{X}(F) + d(V,\emph{cent}_{X}(F)).\]
				
				In order to prove \textit{(iii) $\Rightarrow$ \textit{(i)}}, by \cite[Theorem~1]{Rao2}, it suffices to show that for each $F \in \mc{F}(X)$, $\emph{rad}_{X}(F) = \frac{1}{2} \emph{diam}(F)$. The proof idea is similar to that in \cite[Theorem~3.4]{EWW}. We include it here for the sake of completeness. 
				
				Let $F =\{x_1,\ldots,x_n\} \ci X.$ Without loss of generality, let $\emph{diam}(F) = \|x_1 - x_2\|$. Define $R = \frac{1}{2} \emph{diam}(F)$. Now, let $F^\prime = \{x_1,x_2\}$ and $V= \{x_3\}$. Then clearly $\emph{rad}_{X}(F^\prime)=R$. By our assumption, \[\emph{rad}_{\{x_{3}\}}(F^\prime) = \emph{rad}_{X}(F^\prime)  + \lim_{\delta \rightarrow 0^{+}} d(\{x_{3}\},\emph{cent}_{X}(F^\prime,\delta)).\] Therefore, $2R \geq R + \lim_{\delta \rightarrow 0^{+}} d(\{x_{3}\},\emph{cent}_{X}(F^\prime,\delta))$ and hence, for each $\varepsilon>0$, there exists $x_{\varepsilon} \in X$ such that $r(x_{\varepsilon}, \{x_1,x_2,x_3\}) \leq R+\varepsilon$. It follows that $\emph{rad}_{X}(\{x_1,x_2,x_3\}) = R$. We next consider $F^\prime = \{x_1,x_2,x_3\}$ and $V=\{x_4\}$ and follow the above arguments to obtain $\emph{rad}_{X}(\{x_1,x_2,x_3,x_4\}) = R$. We keep doing this process until we consider $F^\prime = \{x_1,\ldots,x_{n-1}\}$ and $V=\{x_n\}$ and finally obtain that $\emph{rad}_{X}(F) = R$.
			\end{proof}
			
			\begin{cor}\label{C2.3.6}
				Let $X$ be a Banach space. Then the following statements are equivalent.
				\begin{enumerate}[{(i)}]
					\item For each $V \in \mc{CV}(X)$ and $F \in \mc{K}(X)$, $\textrm{rad}_{V}(F) = \textrm{rad}_{X}(F) + d(V,\textrm{cent}_{X}(F)).$
					\item For each $V \in \mc{CV}(X)$ and $F \in \mc{K}(X)$, $\textrm{rad}_{V}(F) = \textrm{rad}_{X}(F) + \lim_{\delta \rightarrow 0^{+}} d(V,\textrm{cent}_{X}(F,\delta)).$
					\item For each $V \in \mc{CV}(X)$ and $F \in \mc{F}(X)$, $\textrm{rad}_{V}(F) = \textrm{rad}_{X}(F) + d(V,\textrm{cent}_{X}(F)).$
				\end{enumerate}
			\end{cor}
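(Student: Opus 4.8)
The plan is to prove the cycle $(i) \Rightarrow (ii) \Rightarrow (iii) \Rightarrow (i)$, exploiting the fact that all the substantive analysis has already been carried out in Theorem~\ref{T2.3.4} and Theorem~\ref{T2.3.5}. The organizing observation is that condition $(iii)$ here is \emph{verbatim} condition $(ii)$ of Theorem~\ref{T2.3.5}, so $(iii)$ is equivalent to $X$ being an $L_1$-predual space; and once $X$ is known to be an $L_1$-predual space, Theorem~\ref{T2.3.4} immediately delivers the direct distance formula for every $F \in \mc{K}(X)$, which is $(i)$. Thus each arrow in the cycle reduces either to an elementary inequality or to these two theorems.

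For $(i) \Rightarrow (ii)$, I would first record the inequality $\textrm{rad}_{V}(F) \leq \textrm{rad}_{X}(F) + \lim_{\delta \rightarrow 0^{+}} d(V,\textrm{cent}_{X}(F,\delta))$, valid in an arbitrary Banach space: given $\delta,\varepsilon>0$, choose $x \in \textrm{cent}_{X}(F,\delta)$ and $v \in V$ with $\|v-x\|$ within $\varepsilon$ of $d(V,\textrm{cent}_{X}(F,\delta))$, and estimate $r(v,F) \leq \|v-x\| + r(x,F) \leq \|v-x\| + \textrm{rad}_{X}(F) + \delta$; letting $\varepsilon \rightarrow 0$ and then $\delta \rightarrow 0^{+}$ gives the claim. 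Since $\textrm{cent}_{X}(F) \ci \textrm{cent}_{X}(F,\delta)$ forces $d(V,\textrm{cent}_{X}(F,\delta)) \leq d(V,\textrm{cent}_{X}(F))$, and since $(i)$ makes $d(V,\textrm{cent}_{X}(F))$ finite (hence $\textrm{cent}_{X}(F) \neq \es$), I can sandwich
\[ \textrm{rad}_{V}(F) \leq \textrm{rad}_{X}(F) + \lim_{\delta \rightarrow 0^{+}} d(V,\textrm{cent}_{X}(F,\delta)) \leq \textrm{rad}_{X}(F) + d(V,\textrm{cent}_{X}(F)) = \textrm{rad}_{V}(F), \]
where the final equality is $(i)$. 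Equality throughout yields $(ii)$. This is precisely the chain already used in the proof of Theorem~\ref{T2.3.5}, the only difference being that $F$ is now compact rather than finite; the argument never uses finiteness, so it transfers verbatim.

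For $(ii) \Rightarrow (iii)$ and $(iii) \Rightarrow (i)$, I would route both through $L_1$-preduality. Because every finite set is compact, restricting $(ii)$ to $F \in \mc{F}(X)$ produces exactly condition $(iii)$ of Theorem~\ref{T2.3.5}, whence $X$ is an $L_1$-predual space; Theorem~\ref{T2.3.4} then supplies the direct distance formula for all $F \in \mc{K}(X)$, and in particular for all $F \in \mc{F}(X)$, which is $(iii)$. Symmetrically, $(iii)$ is condition $(ii)$ of Theorem~\ref{T2.3.5}, so once more $X$ is an $L_1$-predual space, and Theorem~\ref{T2.3.4} upgrades this to the compact-set identity $(i)$.

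The whole argument is essentially bookkeeping: the only self-contained computation is the triangle-inequality bound in the second paragraph, so I do not anticipate any genuine obstacle, the substantive separation and selection machinery being already encapsulated in Theorems~\ref{T2.3.4} and \ref{T2.3.5}. The one point requiring a little care is confirming that the ``general inequality'' is legitimately available for compact, and not merely finite, $F$; but since its proof uses only boundedness of $F$ and non-emptiness of $\textrm{cent}_{X}(F,\delta)$, it holds without change.
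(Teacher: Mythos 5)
Your proposal is correct and follows essentially the same route as the paper: the same chain of inequalities for $(i)\Rightarrow(ii)$, and the same reduction of $(ii)\Rightarrow(iii)$ and $(iii)\Rightarrow(i)$ to Theorems~\ref{T2.3.5} and~\ref{T2.3.4} via the observation that $(iii)$ is exactly condition $(ii)$ of Theorem~\ref{T2.3.5}. The extra care you take (verifying the general inequality for compact rather than finite $F$, and noting that $(i)$ forces $\textrm{cent}_{X}(F)\neq\es$) is sound but does not change the argument.
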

			\begin{proof}
				\textit{(i) $\Rightarrow$ \textit{(ii)}} follows from the following chain of inequalities. For each $F \in \mc{K}(X)$, \[\emph{rad}_{V}(F) \leq \emph{rad}_{X}(F) + \lim_{\delta \rightarrow 0^{+}} d(V,\emph{cent}_{X}(F,\delta)) \leq \emph{rad}_{X}(F) + d(V,\emph{cent}_{X}(F)).\] \textit{(ii) $\Rightarrow$ \textit{(iii)}} follows directly from Theorem~\ref{T2.3.5} and \textit{(iii) $\Rightarrow$ \textit{(i)}} follows from Theorems~\ref{T2.3.5} and \ref{T2.3.4}.
			\end{proof}

         \end{document}